\newcommand{\NN}{\mathbb{N}}
\newcommand{\CC}{\mathbb{C}}
\newcommand{\RR}{\mathbb{R}}
\newcommand{\EE}{\mathbb{E}}
\newcommand{\Oo}{\mathcal{O}}
\newcommand{\Bo}{\mathcal{B}}
\newcommand{\MM}{\mathcal{M}}
\newcommand{\ord}{\textrm{ord}_t}
\renewcommand{\Re}{\operatorname{Re}}
\renewcommand{\epsilon}{\varepsilon}
\renewcommand{\phi}{\varphi}
\newtheorem{thm}{Theorem}[section]
\newtheorem{lemma}{Lemma}[section]
\newtheorem{prop}{Proposition}[section]
\theoremstyle{definition}
\newtheorem{remark}{Remark}[section]
\newtheorem{defi}{Definition}[section]
\newtheorem{example}{Example}[section]
\begin{document}
  \keywords{Gevrey order, Newton polygon, formal norms, moment functions, moment PDEs}
  \subjclass[2010]{35C10, 35G10}
\title[Gevrey estimates for certain moment pdes]{Gevrey estimates for certain moment partial differential equations}
  \author{S{\l}awomir Michalik}
  \address{Faculty of Mathematics and Natural Sciences,
College of Science\\
Cardinal Stefan Wyszy\'nski University\\
W\'oycickiego 1/3,
01-938 Warszawa, Poland\\
ORCiD: 0000-0003-4045-9548}
\email{s.michalik@uksw.edu.pl}
\urladdr{\url{http://www.impan.pl/~slawek}}
\author{Maria Suwi\'nska}
\address{Faculty of Mathematics and Natural Sciences, College of Science\\ Cardinal Stefan Wyszy\'nski University\\
W\'oycickiego 1/3, 01-938 Warszawa, Poland}
\email{m.suwinska@op.pl}
\begin{abstract}
 We consider the Cauchy problem for  inhomogeneous 
 linear moment differential equations with holomorphic
 time dependent coefficients. Using such tools as the
 formal norms, theory of majorants and the properties of
 the Newton polygon, we obtain the Gevrey estimate for
 the formal solution of the equation.
\end{abstract}

\maketitle

\section{Introduction}

The concept of $m$-moment differentiation $\partial_{m,t}$ is a
generalization of, among other differential operators, standard differentiation.
More precisely, if we consider a formal power series $\hat{u}(t)=\sum_{n=0}^{\infty}\frac{u_{n}}{m(n)}t^{n}$,
where $m(n)$ is a moment function (see: Definition \ref{defi:kernel})
then 
\begin{equation*}
\partial_{m,t}\hat{u}(t)=\partial_{m,t}\sum_{n=0}^{\infty}\frac{u_{n}}{m(n)}t^{n}=\sum_{n=0}^{\infty}\frac{u_{n+1}}{m(n)}t^{n}.
\end{equation*}

Moment partial differential equations emerged in \cite{BY} by W.
Balser and M. Yoshino. Later S.~Michalik analysed properties of analytic
solutions of linear moment PDEs with constant coefficients in \cite{Mic1}
and \cite{Mic2}.

In this paper we consider the initial value problem for a linear moment
partial differential equation in the variables $t\in\CC$ and $z=(z_{1},\ldots,z_{N})\in\CC^{N}$
of the form 
\begin{equation}
\left\{ \begin{aligned}\partial_{m_{0},t}^{M}u(t,z)+\sum_{(j,\alpha)\in\Lambda}a_{j,\alpha}(t)\partial_{m_{0},t}^{j}\partial_{m,z}^{\alpha}u(t,z) & =f(t,z)\\
\partial_{m_{0},t}^{j}u(0,z) & =\phi_{j}(z)\textrm{ for }0\leq j<M
\end{aligned}
\right.,\label{eq:main}
\end{equation}
where $\partial_{m_{0},t}$ and $\partial_{m,z}^{\alpha}:=\partial_{m_{1},z_{1}}^{\alpha_{1}}\partial_{m_{2},z_{2}}^{\alpha_{2}}\dots\partial_{m_{N},z_{N}}^{\alpha_{N}}$
denote moment differential operators and the coefficients $a_{j,\alpha}(t)$
do not depend on the variable $z$.

Our main goal is to find the Gevrey estimate of the formal solution
$\hat{u}(t,z)=\sum_{n=0}^{\infty}u_{n}(z)t^{n}$ of (\ref{eq:main})
under a certain set of conditions (see Section \ref{s:main_problem}
for more details). To achieve that we generalise some of the results presented
by H. Tahara and H. Yamazawa in \cite{TY}. In particular, we focus on Theorem 5.1 from the aforementioned paper concerning the Gevrey estimate for solutions of linear PDEs with coefficients depending only on the variable $t$.
\par 
Similar result for the Gevrey order of the formal solutions of the Cauchy problem to linear partial differential equations with variable coefficients was given by A.~Yonemura
\cite{Y}. His result was generalised in many directions.
The extension to different type of Cauchy-Goursat problems for linear equations were given by M.~Miyake \cite{M}, M.~Miyake and Y.~Hashimoto \cite{MH}, M.~Miyake and M.~Yoshino \cite{MY}. Similar problems for nonlinear partial differential equations were studied by such authors as
S.~\={O}uchi \cite{O}, R.~G{\'e}rard and H.~Tahara \cite{GT1,GT2} and A.~Shirai \cite{S}.
\bigskip
\par
Throughout this paper the following notation will be used.

By $D^N_{r}$ we denote the open polydisc in $\CC^{N}$ with radius $r>0$
and a center at the origin, i.e. 
$$D^N_{r}=\{(z_1,\dots,z_N)\in\CC^{N}\colon |z_j|<r\ \textrm{for}\ j=1,\dots,N\}.
$$
If $N=1$ we denote it shortly by $D_{r}$.

For any $d\in\RR$ and $\epsilon>0$ a set 
\begin{equation*}
S_{d}(\epsilon)=\{z\in\CC\colon |z|>0,\,d-\epsilon/2<\arg(z)<d+\epsilon/2\}
\end{equation*}
will be called a sector in a direction $d$ with an opening $\epsilon$
on $\CC$.

If a function $f$ is holomorphic on a set $G\subset\CC^{N}$ then
we will write that $f\in\Oo(G)$.
More generally, if $\EE$ denotes a complex Banach space with a norm $\|\cdot\|_{\EE}$, then
by $\Oo(G,\EE)$ we shall denote the set of all $\EE$-valued  
holomorphic functions on a set $G\subseteq\CC^N$.
For more information about functions with values in Banach spaces we refer the reader to \cite[Appendix B]{B}. 
In the paper, as a Banach space $\EE$ we will
take mainly the space of complex numbers $\CC$ (we abbreviate $\Oo(G,\CC)$ to $\Oo(G)$)
or the space of functions $\Oo_{r}:=\Oo(D_r^N)\cap C(\overline{D_r^N})$ equipped with the norm
$\|\varphi\|_{\Oo_{r}}:=\sup_{z\in D_r^N}|\varphi(z)|$.

For a function $f(t)\in\Oo(D_r)$ we
denote by $\ord(f)$ the order of zero of the function $f(t)$ at $t=0$.

We will denote a space of formal power series $\hat{u}(t)=\sum_{n=0}^{\infty}u_{n}t^{n}$
with coefficients from any Banach space $\mathbb{E}$ by $\mathbb{E}[[t]]$.

Let $\sum_{\alpha\in\NN_0^N}a_{\alpha}x^{\alpha}$ and
$\sum_{\alpha\in\NN_0^N}b_{\alpha}x^{\alpha}$
be two formal power series. We write that $\sum_{\alpha\in\NN_0^N}a_{\alpha}x^{\alpha}\ll\sum_{\alpha\in\NN_0^N}b_{\alpha}x^{\alpha}$,
when $|a_{\alpha}|\leq b_{\alpha}$ for all $\alpha\in\NN_{0}^N$. Then we also call
$\sum_{\alpha\in\NN_0^N}b_{\alpha}x^{\alpha}$ a \emph{majorant} of $\sum_{\alpha\in\NN_0^N}a_{\alpha}x^{\alpha}$.

\section{Moment functions and moment differential operators}

In this section the basic theory of moment functions introduced by W. Balser \cite{B} and moment differential operators defined by W. Balser and
M. Yoshino \cite{BY} will be recalled.

\begin{defi}[compare {\cite[Section 5.5]{B}}]\label{defi:kernel} A pair of functions
$e_{m}$ and $E_{m}$ is said to be \emph{kernel functions of order}
$k>1/2$ if: \begin{enumerate} 

\item $e_{m}\in\Oo(S_{0}(\pi/k))$, $e_{m}(z)/z$ is integrable at
the origin, $e_{m}(x)\in\RR_{+}$ for $x\in\RR_{+}$ and for any $\epsilon>0$
there exist constants $A,B>0$ such that $|e_{m}(z)|\leq Ae^{-(|z|/B)^{k}}$
for $z\in S_{0}(\pi/k-\varepsilon)$, 

\item $E_{m}\in\Oo(\CC)$ and there exist $A,B>0$ such that $|E_{m}(z)|\leq Ae^{B|z|^{k}}$
for $z\in\CC$ and $E_{m}(1/z)/z$ is integrable at the origin in
$S_{\pi}(2\pi-\pi/k)$. 

\item The connection between the above functions $e_{m}$ and $E_{m}$
is given by \emph{the corresponding moment function $m$ of order
$1/k$} defined as 
\begin{equation*}
m(u):=\int_{0}^{\infty}x^{u-1}e_{m}(x)dx
\end{equation*}
for all $u$ such that $\Re u\geq0$, and the kernel function $E_{m}$
has the power series expansion 
\begin{equation*}
E_{m}(z)=\sum_{n=0}^{\infty}\frac{z^{n}}{m(n)}\ \ \textsf{for}\ \ z\in\CC.
\end{equation*}

\item Additionally we shall assume that the normalisation property
holds for the corresponding moment function $m$, i.e. that $m(0)=1$.

\end{enumerate} \end{defi}

Please note that for $k\leq1/2$ the sector $S_{\pi}(2\pi-\pi/k)$
is not defined. For that case the kernel functions as well as their
corresponding moment function have to be defined separately.

\begin{defi}[see {\cite[Section 5.6]{B}}] For any $k>0$ we may define a \emph{kernel function
$e_{m}$ of order $k$} if there is $n\in\NN$ such that $nk>1/2$
and there exist kernel functions $e_{\tilde{m}}$ and $E_{\tilde{m}}$
of order $nk$ satisfying the following condition: 
\begin{equation*}
e_{m}(z)=\frac{e_{\tilde{m}}(z^{1/n})}{n}.
\end{equation*}
Then both the kernel function $E_{m}$ of order $k$ and the corresponding
moment function $m$ of order $1/k$ are defined by the same formulas
as in Definition \ref{defi:kernel}.
\end{defi}

\begin{example}
\label{ex:functions}
 For any $k>0$ the classical and most important kernel functions of order $k$ and the corresponding moment function of order $1/k$ are given by
 \begin{itemize}
  \item $e_m(z)=kz^{k}e^{-z^k}$;
  \item $m(u)=\Gamma(1+u/k)$, where $\Gamma$ is the Gamma function;
  \item $E_m(z)=\sum_{j=0}^{\infty}\frac{z^j}{\Gamma(1+j/k)}=:\mathbf{E}_{1/k}(z)$, where $\mathbf{E}_{1/k}$ is the
    Mittag-Leffler function of index $1/k$.
\end{itemize}
They are used in the classical theory of $k$-summability.
\end{example}

Moreover the set of moment functions is closed under multiplication and division. Namely, we have
\begin{prop}[see {\cite[Theorems 31 and 32]{B}}] Let $m_{1}$ and $m_{2}$ be two moment
functions of orders $s_{1}$ and $s_{2}$, respectively. Then $m_{1}\cdot m_{2}$ is a moment function
of order $s_{1}+s_{2}$ and if moreover $s_1>s_2$, $\frac{m_{1}}{m_{2}}$ is a moment function
of order $s_{1}-s_{2}.$
\end{prop}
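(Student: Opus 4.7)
The plan is to exhibit kernel functions for $m_1 m_2$ and $m_1/m_2$ by means of the multiplicative (Mellin) convolution of the given kernels, and then verify the axioms of Definition \ref{defi:kernel}.

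For the product, given kernel functions $e_{m_1}$ and $e_{m_2}$ of orders $k_1 = 1/s_1$ and $k_2 = 1/s_2$, I would define
\begin{equation*}
e(x) := \int_0^\infty e_{m_1}(x/t)\, e_{m_2}(t)\, \frac{dt}{t}.
\end{equation*}
A Fubini computation on $\int_0^\infty x^{u-1} e(x)\, dx$ yields $m_1(u) m_2(u)$ for $\Re u \geq 0$, so this candidate has the correct moments. Holomorphic extension to the sector $S_0(\pi/k)$, where $1/k = 1/k_1 + 1/k_2$, is obtained by rotating the $t$-contour inside the common domain of holomorphy of $e_{m_1}$ and $e_{m_2}$. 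The exponential bound $|e(z)| \leq A e^{-(|z|/B)^k}$ follows by inserting the Gaussian-type bounds on each factor: the integrand is dominated by $\exp\bigl(-c_1 (|z|/t)^{k_1} - c_2 t^{k_2}\bigr)$, and minimising the exponent in $t>0$ via Young's inequality gives the sharp exponent proportional to $|z|^k$. The companion kernel $E_{m_1 m_2}$ is then defined by its power series $\sum_n z^n/(m_1(n) m_2(n))$, and its entirety together with the growth of order $k$ reduce to Stirling-type estimates $m_1(n)\, m_2(n) \asymp \Gamma(1 + n/k)$.

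For the quotient, assume $s_1 > s_2$, i.e.\ $k_1 < k_2$. The idea is to invert the product construction: since Mellin convolution corresponds to multiplication of moments, writing $m_1 = (m_1/m_2)\cdot m_2$ suggests recovering the desired kernel $e$ by an inverse Mellin transform,
\begin{equation*}
e(x) := \frac{1}{2\pi i}\int_{c - i\infty}^{c + i\infty} \frac{m_1(u)}{m_2(u)}\, x^{-u}\, du,
\end{equation*}
along a vertical line in the right half-plane. The condition $s_1 > s_2$ is exactly what makes $m_1/m_2$ grow on vertical strips like $\Gamma(1+u/k)$ with $1/k = s_1 - s_2$; this controls the convergence of the contour integral and yields a function holomorphic on $S_0(\pi/k)$ with the prescribed exponential decay of order $k$. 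The normalisation $m(0)=1$ is automatic from $m_1(0)=m_2(0)=1$, and $E_{m_1/m_2}$ is again constructed via its power series.

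The main obstacle is the sharp decay estimate in both constructions: matching the opening $\pi/k$ of the sector with the optimal order $k$ that the convolution or inverse Mellin transform produces. For the product this requires a careful saddle-point analysis coupled with the identity $\pi/k = \pi/k_1 + \pi/k_2$ to justify the contour rotation all the way to the boundary of $S_0(\pi/k)$; for the quotient one must carefully track the Stirling asymptotics of $m_1/m_2$ on shifted vertical contours to secure both convergence and the right sector opening. Once those exponential bounds are in place, the residual verifications (integrability of $e_m(z)/z$ at the origin and of $E_m(1/z)/z$ at the origin of the complementary sector) are routine.
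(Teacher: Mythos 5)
First, a point of comparison: the paper does not prove this proposition at all --- it is quoted from Balser's book (Theorems 31 and 32 of \cite{B}), so there is no internal proof to measure your attempt against. Your product construction, the multiplicative convolution $e(x)=\int_0^\infty e_{m_1}(x/t)\,e_{m_2}(t)\,dt/t$, is exactly the standard (and Balser's) one, and the Fubini computation of the moments, the contour rotation based on $\pi/k=\pi/k_1+\pi/k_2$, the positivity of $e$ on $(0,\infty)$ (automatic since both factors are positive there), and the minimisation of the exponent are all sound in outline. One thing you pass over: the resulting order is $1/k=s_1+s_2$, which is very often $\ge 2$, i.e.\ $k\le 1/2$ (already for $s_1=s_2=1$). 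In that case Definition \ref{defi:kernel} does not apply directly (the sector $S_\pi(2\pi-\pi/k)$ is undefined) and one must instead exhibit $e$ in the form $e_{\tilde m}(z^{1/n})/n$ for a kernel $e_{\tilde m}$ of order $nk>1/2$, per the second definition; likewise the input kernels may themselves be of order $\le 1/2$ and live on the Riemann surface of the logarithm. This is fixable but not cosmetic.

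The quotient half has genuine gaps. (i) Your inverse Mellin transform needs $y\mapsto m_1(c+iy)/m_2(c+iy)$ to be integrable on a vertical line. The only quantitative information the theory provides about a general moment function is the estimate (\ref{eq:gevrey}), which controls $m_j(n)$ at nonnegative integers, i.e.\ along the positive real axis; it gives no decay in $\Im u$ and, crucially, no lower bound on $|m_2(c+iy)|$ --- a priori $m_2$ could be small or even vanish off the real axis, making the integrand singular. So the claim that ``$s_1>s_2$ is exactly what makes $m_1/m_2$ grow on vertical strips like $\Gamma(1+u/k)$'' is a Stirling computation only in the model case $m_j=\Gamma_{s_j}$ and is unsupported in general. (ii) Even granting convergence, a kernel function must satisfy $e(x)\in\RR_+$ for $x\in\RR_+$; this is immediate for your convolution but not at all for an inverse Mellin transform, and you do not address it. Balser's proof of Theorem 32 avoids both problems by building the quotient kernel from a contour integral involving the kernels $e_{m_1}$ and $E_{m_2}$ themselves rather than from the ratio of the moment functions. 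As written, the product half is a reasonable sketch of the standard argument, while the quotient half is a plan whose two central verifications are missing.
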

The above proposition suggests to define moment functions of order zero.
\begin{defi}
We call $m$ a \emph{moment function of order $0$},
if there exist moment functions $m_{1},\,m_{2}$ of the same order
$s$ such that $m=\frac{m_{1}}{m_{2}}.$
\end{defi}

It is worth noting that all moment functions of order $s$ have
the same growth as $\Gamma_{s}(x):=\Gamma(1+sx)$ (see \cite[Section 5.5]{B}), i.e. there exist
positive constants $a$ and $A$ such that 
\begin{gather}
\label{eq:gevrey}
a^{n}\Gamma_{s}(n)\leq m(n)\leq A^{n}\Gamma_{s}(n)\quad\textrm{for every}\quad
n\in\NN_0.
\end{gather}

We will consider a special class of moment functions:
\begin{defi}
For any $s>0$ we say that a moment function $m$ of order $s$ is a \emph{regular moment function of order $s$} if there exist constants $a,A>0$ such that 
\begin{equation}
 an^{s}\leq\frac{m(n)}{m(n-1)}\leq An^{s}\quad\textrm{for every}\quad n\in\NN.
\label{de:MM_class}
\end{equation}
We denote the set of regular moment functions of order $s$ by $\MM_s$.
\end{defi}

Regular moment functions satisfy the following properties:
\begin{lemma} \begin{enumerate}[(a)] 
\item The class of regular moment functions is closed under multiplication and
division. More precisely, if $m_{1}\in\MM_{s_{1}}$ and $m_{2}\in\MM_{s_{2}}$ then $m_{1}\cdot m_{2}\in\MM_{s_{1}+s_{2}}$
and if moreover $s_{1}> s_{2}$, $\frac{m_{1}}{m_{2}}\in\MM_{s_{1}-s_{2}}$.\label{enu:moment_class1} 

\item The class of regular moment functions contains classical moment functions $\Gamma_s$ of order $s$, i.e. $\Gamma_{s}\in\MM_{s}$ for any $s>0$.\label{enu:moment_class2}
\end{enumerate}
\end{lemma}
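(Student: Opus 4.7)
\medskip

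\noindent\textbf{Proof plan.} The whole lemma reduces to manipulating the one--step ratio $m(n)/m(n-1)$, which is exactly the quantity controlled by the defining inequality \eqref{de:MM_class}. That $m_1\cdot m_2$ and (when $s_1>s_2$) $m_1/m_2$ are moment functions of the stated orders has already been recorded as a consequence of \cite[Theorems 31 and 32]{B}, so the only thing left to verify in (a) is the two--sided bound \eqref{de:MM_class} for the new ratio.

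For (a) I would simply factor the telescoping ratio. Writing
\[
\frac{(m_1 m_2)(n)}{(m_1 m_2)(n-1)}=\frac{m_1(n)}{m_1(n-1)}\cdot\frac{m_2(n)}{m_2(n-1)},
\qquad
\frac{(m_1/m_2)(n)}{(m_1/m_2)(n-1)}=\frac{m_1(n)/m_1(n-1)}{m_2(n)/m_2(n-1)},
\]
and plugging in the bounds $a_i n^{s_i}\le m_i(n)/m_i(n-1)\le A_i n^{s_i}$ given by the hypothesis $m_i\in\MM_{s_i}$, I immediately obtain
\[
 a_1 a_2\, n^{s_1+s_2}\le \frac{(m_1m_2)(n)}{(m_1m_2)(n-1)}\le A_1 A_2\, n^{s_1+s_2},\qquad
 \frac{a_1}{A_2}\, n^{s_1-s_2}\le \frac{(m_1/m_2)(n)}{(m_1/m_2)(n-1)}\le \frac{A_1}{a_2}\, n^{s_1-s_2}.
\]
This is exactly what is needed. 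No positivity issue arises because $m(n)>0$ for every moment function by the integral representation in Definition~\ref{defi:kernel}.

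For (b) I would compute the ratio explicitly: $\Gamma_s(n)/\Gamma_s(n-1)=\Gamma(1+sn)/\Gamma(1+s(n-1))$. The function
\[
 \phi(x):=\frac{\Gamma(x+s)}{\Gamma(x)\, x^{s}}
\]
is continuous and strictly positive on $(0,\infty)$ and tends to $1$ as $x\to\infty$ by the classical asymptotic $\Gamma(x+s)/\Gamma(x)\sim x^{s}$, hence is bounded above and below by positive constants on $[1,\infty)$. Applying this to $x=1+s(n-1)\ge 1$ for $n\in\NN$ gives $c_1(1+s(n-1))^{s}\le \Gamma_s(n)/\Gamma_s(n-1)\le c_2(1+s(n-1))^{s}$. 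Since $1+s(n-1)$ is comparable to $n$ uniformly for $n\ge 1$ (i.e.\ there are $\kappa_1,\kappa_2>0$ with $\kappa_1 n\le 1+s(n-1)\le \kappa_2 n$), the two--sided bound $an^{s}\le\Gamma_s(n)/\Gamma_s(n-1)\le A n^{s}$ follows with $a=c_1\kappa_1^{s}$, $A=c_2\kappa_2^{s}$, proving $\Gamma_s\in\MM_s$.

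The only mildly delicate point is making the bound in (b) uniform for every $n\in\NN$ and not only asymptotically; this is handled exactly by invoking the continuity and positivity of $\phi(x)$ on the closed half-line $[1,\infty)$ together with its limit at infinity, so that the constants can absorb the behaviour at small $n$.
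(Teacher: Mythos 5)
Your proof is correct, and for part (a) it is essentially the paper's own argument: both multiply the two-sided bounds on the one-step ratios $m_i(n)/m_i(n-1)$ and note that the product (resp.\ quotient) is already known to be a moment function of the right order; the paper omits the division case as ``similar'' while you write out the constants $a_1/A_2$ and $A_1/a_2$ explicitly, which is a harmless improvement. For part (b) you take a genuinely different, softer route. The paper applies Stirling's two-sided inequality directly to $\Gamma(1+ns)/\Gamma(1+ns-s)$ and chases explicit constants such as $e^{-s-1}s^s$ and $(1+\tfrac1s)^s e s^s$. You instead observe that $\phi(x)=\Gamma(x+s)/(\Gamma(x)x^s)$ is continuous, positive on $[1,\infty)$ and tends to $1$ at infinity, hence is bounded above and below there, and then evaluate at $x=1+s(n-1)$ and use the uniform comparability of $1+s(n-1)$ with $n$. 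This is valid and shorter, at the price of non-explicit constants (which the paper does not need anyway) and of resting on the asymptotic $\Gamma(x+s)/\Gamma(x)\sim x^s$, which is itself usually derived from Stirling --- so the two proofs ultimately lean on the same underlying estimate, packaged differently. One cosmetic point: make sure the comparability constants $\kappa_1,\kappa_2$ are chosen to cover $n=1$ (e.g.\ $\kappa_1=\min(1,s/2)$, $\kappa_2=1+s$), which your phrasing ``uniformly for $n\ge1$'' correctly intends.
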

\begin{proof}
Let $a_{1},A_{1},a_{2},A_{2}$
be positive constants such that for $i=1,2$ and for every $n\in\NN$ we have: 
\begin{equation*}
a_{i}n^{s_{i}}\leq\frac{m_{i}(n)}{m_{i}(n-1)}\leq A_{i}n^{s_{i}}.
\end{equation*}
For any $n\in\NN$ we have: 
\begin{equation*}
a_{1}a_{2}n^{s_{1}+s_{2}}\leq\frac{m_{1}(n)m_{2}(n)}{m_{1}(n-1)m_{2}(n-1)}\leq A_{1}A_{2}n^{s_{1}+s_{2}}.
\end{equation*}

Seeing as $m_{1}\cdot m_{2}$ is a moment function of order $s_{1}+s_{2}$,
this proves the first part of (\ref{enu:moment_class1}). In the case
of $\frac{m_{1}}{m_{2}}$, which is a moment function of order $s_{1}-s_{2}$,
the proof is similar and it will be omitted.

In order to show (\ref{enu:moment_class2}) we can use the Stirling formula (see \cite{WW}): 
\begin{equation}
\label{eq:stirling}
\sqrt{2\pi}x^{x-\frac{1}{2}}e^{-x}\leq\Gamma(x)\leq\sqrt{2\pi}e^{\frac{1}{12x}}x^{x-\frac{1}{2}}e^{-x}<\sqrt{2\pi}x^{x-\frac{1}{2}}e^{-x+1}\textrm{ for }x\geq1.
\end{equation}
Then for any $n\in\NN$ we receive: 
\begin{align*}
\frac{\Gamma(1+ns)}{\Gamma(1+ns-s)} & \leq e^{-s+1}\left(\frac{1+ns}{1+ns-s}\right)^{1+ns-s-\frac{1}{2}}(1+ns)^{s}\\
 & \leq e^{-s+1}\left(1+\frac{s}{1+ns-s}\right)^{1+ns-s}\sqrt{\frac{1+ns-s}{1+ns}}\left(1+\frac{1}{s}\right)^{s}s^{s}n^{s}\\
 & \leq \left(1+\frac{1}{s}\right)^{s}es^{s}n^{s}.
\end{align*}

In a similar
way we obtain the inequalities: 
\begin{align*}
\frac{\Gamma(1+ns)}{\Gamma(1+ns-s)} & \geq e^{-s-1}\left(\frac{1+ns}{1+ns-s}\right)^{ns-s+\frac{1}{2}}(1+ns)^{s}\\
 & \geq e^{-s-1}s^{s}n^{s}.
\end{align*} 
\end{proof}

One more property of the Gamma function will be used extensively in
subsequent sections of this paper:

\begin{lemma}\label{lem:gamma}
For any $s\geq0$ there exist constants $\tilde{c_{s}}$ and $\tilde{C}_{s}$
such that for all $x\geq s$ we have
\begin{equation*}
\tilde{c}_{s}(1+x)^{s}\leq\frac{\Gamma(1+x)}{\Gamma(1+x-s)}\leq\tilde{C}_{s}(1+x)^s.
\end{equation*}
\end{lemma}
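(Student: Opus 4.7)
The plan is to derive the bounds directly from the Stirling estimate (\ref{eq:stirling}) already recalled in the paper, which applies as soon as the argument is at least $1$. Since we assume $x\geq s$, both $1+x\geq 1$ and $1+x-s\geq 1$, so Stirling is applicable to the numerator and the denominator of the ratio simultaneously. The case $s=0$ is trivial (the ratio is $1$), so I will assume $s>0$ below.

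For the upper bound, I would estimate $\Gamma(1+x)$ from above and $\Gamma(1+x-s)$ from below using (\ref{eq:stirling}), giving
\[
\frac{\Gamma(1+x)}{\Gamma(1+x-s)}\leq \frac{(1+x)^{x+\frac{1}{2}}}{(1+x-s)^{x-s+\frac{1}{2}}}\,e^{1-s}.
\]
Setting $y:=1+x-s\geq 1$, I would then factor out $(1+x)^{s}$ to write
\[
\frac{(1+x)^{x+\frac{1}{2}}}{(1+x-s)^{x-s+\frac{1}{2}}}=(1+x)^{s}\Bigl(1+\tfrac{s}{y}\Bigr)^{y-\frac{1}{2}}.
\]
The key elementary fact I will invoke is that for $s>0$ the function $y\mapsto(1+s/y)^{y}$ is increasing on $(0,\infty)$ with limit $e^{s}$, hence $(1+s/y)^{y-\frac{1}{2}}\leq(1+s/y)^{y}\leq e^{s}$. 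This yields $\tilde C_{s}=e$.

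For the lower bound, I would swap the roles: estimate $\Gamma(1+x)$ from below and $\Gamma(1+x-s)$ from above using (\ref{eq:stirling}). The same factorisation of $(1+x)^s$ appears, and this time I only need that $(1+s/y)^{y-\frac{1}{2}}\geq 1$ for $s\geq 0$ and $y\geq 1$, giving
\[
\frac{\Gamma(1+x)}{\Gamma(1+x-s)}\geq (1+x)^{s}\,e^{-1-s},
\]
so $\tilde c_{s}=e^{-1-s}$ works. Since both constants depend only on $s$, this proves the lemma.

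No step here presents a genuine obstacle; the only thing to watch is that the Stirling bounds in (\ref{eq:stirling}) require arguments at least $1$, which is precisely why the hypothesis $x\geq s$ (rather than $x\geq 0$) is needed. The rest is bookkeeping with the monotonicity of $(1+s/y)^y$.
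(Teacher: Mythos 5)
Your proof is correct and follows essentially the same route as the paper's: apply the Stirling bounds (\ref{eq:stirling}) to numerator and denominator, factor out $(1+x)^{s}$, and control the remaining factor $\bigl(1+\tfrac{s}{1+x-s}\bigr)^{x-s+\frac12}$ by $e^{s}$ from above and by $1$ from below, arriving at the same constants $\tilde C_s=e$ and $\tilde c_s=e^{-s-1}$. The only difference is that you spell out the monotonicity of $(1+s/y)^{y}$, which the paper leaves implicit.
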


\begin{proof}
As before, we use the Stirling formula (\ref{eq:stirling}) to receive:
\begin{equation*}
\frac{\Gamma(1+x)}{\Gamma(1+x-s)}\leq\left(\frac{1+x}{1+x-s}\right)^{1/2+x-s}e^{-s+1}(1+x)^{s}\leq e(1+x)^{s}.
\end{equation*}
Similarly, we show the second inequality:
\begin{equation*}
\frac{\Gamma(1+x)}{\Gamma(1+x-s)}\geq\left(\frac{1+x}{1+x-s}\right)^{1/2+x-s}e^{-s-1}(1+x)^{s}\geq e^{-s-1}(1+x)^{s}.
\end{equation*}
\end{proof}

Using moment functions, we can also define a moment Borel transform.

\begin{defi}
Let $m$ be a moment function of order $s$. Then
we define an \emph{$m$-moment Borel transform} as an operator $\Bo_{m,t}\colon\mathbb{E}[[t]]\to\mathbb{E}[[t]]$
given by the formula: 
\begin{equation*}
\Bo_{m,t}\left(\sum_{n=0}^{\infty}u_{n}t^{n}\right):=\sum_{n=0}^{\infty}\frac{u_{n}}{m(n)}t^{n}.
\end{equation*}
\end{defi}

\begin{defi}
Assume that $m$ is a moment function of order
$s\geq 0$. Then $\hat{u}\in\mathbb{E}[[t]]$ is a \emph{formal power series of
Gevrey order $s$} if there exists $r>0$ such that $\Bo_{m,t}\hat{u}\in\Oo(D_r,\EE)$.
We denote the space of all such power series by $\mathbb{E}[[t]]_{s}$.
\end{defi}

\begin{remark}
\label{rem:gevrey}
 Observe that by (\ref{eq:gevrey})  the formal series $\hat{u}=\sum_{n=0}^{\infty}u_nt^n\in\mathbb{E}[[t]]$ is of Gevrey order $s$
 if and only if there exist $B,C>0$ such that
 \begin{gather*}
  \|u_n\|_{\EE}\leq BC^n\Gamma_s(n)\quad\textrm{for}\quad n\in\NN_0.
 \end{gather*}
 For this reason any formal power series of Gevrey order $0$ is convergent.
 
 It also means that the definition of formal power series of Gevrey order $s$ does not depend on the choice of a moment function $m$ of order $s$.
\end{remark} 

\begin{defi}[see \cite{BY}] Let $m$ be a moment function. Then
we define an \emph{$m$-differential operator} $\partial_{m,t}\colon\mathbb{E}[[t]]\to\mathbb{E}[[t]]$
by the formula: 
\begin{equation*}
\partial_{m,t}\left(\sum_{n=0}^{\infty}\frac{u_{n}}{m(n)}t^{n}\right):=\sum_{n=0}^{\infty}\frac{u_{n+1}}{m(n)}t^{n}.
\end{equation*}
\end{defi}

   Below we present most important examples of moment differential operators.
     Other examples, including also integro-differential operators, can be found in
     \cite[Example 3]{Mic3}.
     \begin{example} 
     If $m(u)=\Gamma_1(u)$ then the operator $\partial_{m,x}$ coincides with the usual differentiation $\partial_x$.
     
     More generally, if $s>0$ and $m(u)=\Gamma_s(u)$ then the operator $\partial_{m,x}$ satisfies
     $(\partial_{m,x}\widehat{u})(x^s)=\partial^s_x(\widehat{u}(x^s))$,
     where $\partial^s_x$ denotes the Caputo fractional derivative of order $s$
     defined by
       $$
     \partial^{s}_{x}\Big(\sum_{j=0}^{\infty}\frac{u_{j}}{\Gamma_s(j)}x^{sj}\Big):=
     \sum_{j=0}^{\infty}\frac{u_{j+1}}{\Gamma_s(j)}x^{sj}.$$
     \end{example}

Immediately by the definitions, we obtain the following commutation formula between moment differential operators and moment Borel transforms

\begin{prop}[Commutation formula]
\label{prop:commutation}
Let $m$ and $m'$ be moment functions. Then
the operators $\hat{\Bo}_{m',t}, \partial_{m,t}\colon\EE[[t]]\to\EE[[t]]$ satisfy the commutation formula
$$\hat{\Bo}_{m',t}\partial_{m,t}\hat{u}=\partial_{mm',t}\hat{\Bo}_{m',t}\hat{u}\quad\textrm{for any}\quad \hat{u}\in\EE[[t]].$$
\end{prop}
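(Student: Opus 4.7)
The plan is to verify the identity by a direct coefficient-by-coefficient computation on an arbitrary formal series $\hat{u}=\sum_{n=0}^{\infty}u_n t^n\in\EE[[t]]$. Both sides of the claimed equality are defined by explicit prescriptions on power-series coefficients, so after rewriting $\hat u$ in the normalized form required by each definition, the two sides should reduce to the same series.

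First I would compute the left-hand side. Writing $\hat u=\sum_{n=0}^{\infty}\frac{u_n m(n)}{m(n)}t^n$ so that the defining formula for $\partial_{m,t}$ applies, I get
$$
\partial_{m,t}\hat{u}=\sum_{n=0}^{\infty}\frac{u_{n+1}\,m(n+1)}{m(n)}\,t^{n},
$$
and then applying $\hat{\Bo}_{m',t}$ termwise gives
$$
\hat{\Bo}_{m',t}\,\partial_{m,t}\hat{u}=\sum_{n=0}^{\infty}\frac{u_{n+1}\,m(n+1)}{m(n)\,m'(n)}\,t^{n}.
$$

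Next I would compute the right-hand side. Applying $\hat{\Bo}_{m',t}$ first yields $\hat{\Bo}_{m',t}\hat{u}=\sum_{n=0}^{\infty}\frac{u_n}{m'(n)}t^n$. To apply $\partial_{mm',t}$ I rewrite this as $\sum_{n=0}^{\infty}\frac{u_n\,m(n)}{m(n)\,m'(n)}t^n$ (putting the coefficient series in the shape prescribed by the definition of $\partial_{mm',t}$, whose denominators are $(mm')(n)=m(n)m'(n)$), and then obtain
$$
\partial_{mm',t}\,\hat{\Bo}_{m',t}\hat{u}=\sum_{n=0}^{\infty}\frac{u_{n+1}\,m(n+1)}{m(n)\,m'(n)}\,t^{n}.
$$
Comparing the two displays term by term finishes the proof.

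There is no real obstacle here: the statement is a purely formal identity, and the only point that deserves a line of care is the rewriting step that presents each intermediate series in the exact monomial form $\sum \frac{v_n}{\text{moment}(n)}t^n$ expected by the definitions of $\partial_{m,t}$ and $\partial_{mm',t}$. Once that bookkeeping is done, the equality of coefficients is immediate. No convergence issues arise because everything takes place in the formal power series ring $\EE[[t]]$.
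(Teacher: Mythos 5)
Your computation is correct and is exactly the coefficient-by-coefficient verification that the paper has in mind when it states the proposition is obtained ``immediately by the definitions'' (the paper omits the proof entirely). Both sides indeed reduce to $\sum_{n\geq 0}\frac{u_{n+1}\,m(n+1)}{m(n)\,m'(n)}t^{n}$, so nothing is missing.
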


We estimate moment derivatives of holomorphic functions as follows

\begin{prop}[see {\cite[Lemma 1]{Mic1}}]\label{prop_diff} Let
$f$ be a function holomorphic on $D_{R}\subset\CC$ and let $m$
be a moment function of order $s$. Then for any positive
$r<r'<R$ there
exist a constant $h>0$ such that for all $\alpha\in\NN_{0}$
\begin{equation*}
\sup_{z\in D_{r}}|\partial_{m,z}^{\alpha}f(z)|\leq \sup_{z\in D_{r'}}|f(z)|h^{\alpha}\Gamma(1+s\alpha).
\end{equation*}
\end{prop}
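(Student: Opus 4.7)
The plan is to combine Cauchy's estimates for the Taylor coefficients of $f$ with the explicit series representation of $\partial_{m,z}^{\alpha}$. If $f(z)=\sum_{n\geq 0}f_n z^n$ is the Taylor expansion on $D_R$, Cauchy's formula on circles of radius tending to $r'$ (combined with the maximum modulus principle) yields $|f_n|\leq M'/r'^n$, where $M':=\sup_{z\in D_{r'}}|f(z)|$. From the definition of $\partial_{m,z}$ one checks by induction that $\partial_{m,z}^{\alpha}f(z)=\sum_{n\geq 0}\frac{m(n+\alpha)}{m(n)}f_{n+\alpha}z^n$, so for $z\in D_r$,
\begin{equation*}
|\partial_{m,z}^{\alpha}f(z)|\leq \frac{M'}{r'^{\alpha}}\sum_{n=0}^{\infty}\frac{m(n+\alpha)}{m(n)}\left(\frac{r}{r'}\right)^{n}.
\end{equation*}

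The heart of the argument is to bound this series by $Ch^{\alpha}\Gamma(1+s\alpha)$ for suitable $C,h>0$ depending only on $r,r',m$. I would first invoke the growth property (\ref{eq:gevrey}) to write $m(n+\alpha)/m(n)\leq A^{\alpha}(A/a)^{n}\,\Gamma_s(n+\alpha)/\Gamma_s(n)$, and then apply Lemma \ref{lem:gamma} (with $x=s(n+\alpha)$ and shift parameter $s\alpha$) to obtain $\Gamma_s(n+\alpha)/\Gamma_s(n)\leq e(1+s(n+\alpha))^{s\alpha}$, which is polynomial in $n$ of degree $s\alpha$. Comparing the resulting tail with the Gamma integral $\int_{0}^{\infty}(1+s(x+\alpha))^{s\alpha}q^{x}\,dx$ via the substitution $u=(s(x+\alpha)+1)|\log q|/s$ produces a bound of the form $C(q)\bigl(s/(q|\log q|^{s})\bigr)^{\alpha}\Gamma(1+s\alpha)$, which has the desired shape. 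The case $\alpha=0$ reduces directly to the maximum modulus principle, and the remaining finite multiplicative constants are absorbed into $h$ at the cost of replacing $h$ by a slightly larger value.

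The main technical obstacle is that this direct application of (\ref{eq:gevrey}) introduces a geometric ratio $q=(A/a)(r/r')$ which must be strictly less than one for the series to converge, whereas the hypothesis only provides $r<r'$. I would overcome this by appealing to the sharper estimate available whenever $m$ lies in the regular class $\MM_s$: using (\ref{de:MM_class}) iteratively one gets $m(n+\alpha)/m(n)=\prod_{k=n+1}^{n+\alpha}m(k)/m(k-1)\leq A^{\alpha}\prod_{k=n+1}^{n+\alpha}k^{s}\leq A^{\alpha}(n+\alpha)^{s\alpha}$, which removes the unwanted $(A/a)^{n}$ factor entirely. The same integral comparison then delivers the estimate for any $q=r/r'<1$, with $h$ of the form $A/(r|\log(r/r')|^{s})$ up to a fixed multiplicative constant, and the proposition follows.
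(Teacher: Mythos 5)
The paper does not prove Proposition \ref{prop_diff}; it imports it from \cite[Lemma 1]{Mic1}, where the argument runs through an integral representation of $\partial_{m,z}^{\alpha}f$ built from the kernel functions $e_m$ and $E_m$, with the factor $\Gamma(1+s\alpha)$ produced by the exponential decay $|e_m(x)|\leq Ae^{-(x/B)^{1/s}}$ via $\int_0^\infty \zeta^{\alpha}e^{-(\zeta/B)^{1/s}}\,d\zeta/\zeta$. Your coefficientwise route is a reasonable alternative in principle, and its mechanical parts are fine: the formula $\partial_{m,z}^{\alpha}f(z)=\sum_n \frac{m(n+\alpha)}{m(n)}f_{n+\alpha}z^n$, the Cauchy estimates, and the integral comparison yielding a bound of the shape $h^{\alpha}\Gamma(1+s\alpha)$ are all correct.

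The gap is in the step you yourself flag and then paper over: your repair of the divergent geometric factor $(A/a)^n$ assumes $m\in\MM_s$, i.e.\ the two-sided bound (\ref{de:MM_class}) on consecutive ratios $m(n)/m(n-1)$. But the proposition hypothesizes only that $m$ is a moment function of order $s$, for which the sole available growth information is (\ref{eq:gevrey}); that controls $m(n)$ against $\Gamma_s(n)$ only up to geometric factors $a^n$ and $A^n$ with possibly $A/a$ arbitrarily large, and it does \emph{not} imply regularity. (Log-convexity of $n\mapsto m(n)$, which does hold for Mellin transforms of positive kernels, still does not give $m(n)/m(n-1)\leq An^s$ from (\ref{eq:gevrey}) alone.) So your argument proves the statement only for the subclass $\MM_s$, not under the stated hypothesis, and no choice of intermediate radii rescues the first approach when $r/r'>a/A$. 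To close the gap for general $m$ you need the kernel-function representation as in \cite{Mic1}; alternatively you would have to weaken the proposition to regular moment functions, which is not what is claimed.
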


Proposition \ref{prop_diff} can be generalized to the multidimensional case.
\begin{prop}
\label{prop:general}
Let $m_1,\ldots,m_N$ be moment functions of orders $s_1,\ldots,s_N$, respectively, with $s=(s_1,\ldots,s_N)$, and suppose that $f\in\Oo(D^N_R)$ for certain $R>0$. Then for any $0<r<r'<R$ there
exist constant $\tilde{h}>0$ such that for all $\alpha\in\NN^N_{0}$
\begin{equation*}
\sup_{z\in D^N_{r}}|\partial_{m_1,z_1}^{\alpha_1}\ldots\partial_{m_N,z_N}^{\alpha_N}f(z)|\leq 
\sup_{z\in D^N_{r'}}|f(z)|\tilde{h}^{|\alpha|}\Gamma(1+s\cdot\alpha).
\end{equation*}
\end{prop}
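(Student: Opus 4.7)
The plan is to reduce the multi-dimensional estimate to $N$ successive applications of Proposition~\ref{prop_diff}, one per coordinate, threaded by a sequence of intermediate radii $r = r_0 < r_1 < \cdots < r_N = r'$.

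For each $k\in\{1,\dots,N\}$, Proposition~\ref{prop_diff} applied in the $z_k$-variable with radii $r_{k-1}<r_k$ (and the other coordinates treated as parameters) furnishes a constant $h_k>0$, depending only on $r_{k-1},r_k,m_k,s_k$, such that every $g$ holomorphic in $z_k$ on $D_{r_k}$ satisfies
\[
\sup_{|z_k|<r_{k-1}}|\partial_{m_k,z_k}^{\alpha_k}g(z_k)|\leq h_k^{\alpha_k}\Gamma(1+s_k\alpha_k)\sup_{|z_k|<r_k}|g(z_k)|.
\]
Because moment derivatives in distinct variables commute (both act coefficient-wise on formal series), I would iterate: at the $k$-th step, strip off $\partial_{m_k,z_k}^{\alpha_k}$ via the bound above, and then enlarge the polydisc in the remaining coordinates from $D_{r_{k-1}}$ to $D_{r_k}$, which only increases the right-hand side. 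Holomorphy of each partially moment-differentiated function on the current polydisc is itself guaranteed inductively by the one-dimensional bound (which produces holomorphy of $\partial^{\alpha}_{m,z}g$ on $D_r$ for any $r<R$) combined with Hartogs's theorem on separately holomorphic functions. After $N$ iterations,
\[
\sup_{z\in D^N_r}\bigl|\partial_{m_1,z_1}^{\alpha_1}\cdots\partial_{m_N,z_N}^{\alpha_N}f(z)\bigr|\leq \Big(\prod_{k=1}^N h_k^{\alpha_k}\Gamma(1+s_k\alpha_k)\Big)\sup_{z\in D^N_{r'}}|f(z)|.
\]

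Setting $h_{\ast}:=\max_k h_k$ gives $\prod_k h_k^{\alpha_k}\leq h_\ast^{|\alpha|}$. To collapse the product of Gamma factors into a single $\Gamma(1+s\cdot\alpha)$ I would invoke the elementary Beta-function bound $B(1+a,1+b)=\int_0^1 t^a(1-t)^b\,dt\leq 1$ for $a,b\geq 0$; rearranging, this gives $\Gamma(1+a)\Gamma(1+b)\leq(1+a+b)\Gamma(1+a+b)\leq e^{a+b}\Gamma(1+a+b)$. Applied telescopically $N-1$ times it yields $\prod_k\Gamma(1+s_k\alpha_k)\leq e^{s\cdot\alpha}\Gamma(1+s\cdot\alpha)\leq(e^{s_{\max}})^{|\alpha|}\Gamma(1+s\cdot\alpha)$, where $s_{\max}:=\max_k s_k$. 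Taking $\tilde{h}:=h_\ast\, e^{s_{\max}}$ then closes the estimate.

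The step I expect to be the main obstacle is this last one: compressing $N$ separate Gamma factors into a single $\Gamma(1+s\cdot\alpha)$ while paying only an exponential-in-$|\alpha|$ price (as opposed to a polynomial factor that would spoil later multi-index summations). The Beta-function telescoping above is precisely what delivers this cleanly. A subsidiary bookkeeping concern is verifying holomorphy of each intermediate partially moment-differentiated function on the current polydisc, but this is absorbed into the iteration itself rather than requiring a separate argument.
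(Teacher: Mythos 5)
Your proof runs along the same lines as the paper's: apply Proposition~\ref{prop_diff} once per coordinate to peel off $h_k^{\alpha_k}\Gamma(1+s_k\alpha_k)$, then absorb the product of Gamma factors into a single $\Gamma(1+s\cdot\alpha)$ at the cost of a geometric factor in $|\alpha|$. The only notable (cosmetic) difference in the merging step is that the paper first collects $\prod_k\Gamma(1+s_k\alpha_k)\leq\Gamma(N+s\cdot\alpha)$ via $\Gamma(u)\Gamma(w)\leq\Gamma(u+w)$ (valid since $\Re u,\Re w\geq 1$ here) and then strips off the $N-1$ linear factors $(1+s\cdot\alpha)\cdots(N-1+s\cdot\alpha)$, whereas you apply the same Beta-function inequality in its two-factor form and telescope; these are equivalent.

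One small slip in the bookkeeping: the telescoping does not give $\prod_k\Gamma(1+s_k\alpha_k)\leq e^{s\cdot\alpha}\Gamma(1+s\cdot\alpha)$. Each of the $N-1$ steps picks up a factor $(1+A_j)$ with $A_j$ the running partial sum, so what you actually get is $\prod_k\Gamma(1+s_k\alpha_k)\leq(1+s\cdot\alpha)^{N-1}\Gamma(1+s\cdot\alpha)$, and for $N\geq 3$ and small $|\alpha|$ this exceeds $e^{s\cdot\alpha}\Gamma(1+s\cdot\alpha)$. The fix is immediate: $(1+s\cdot\alpha)^{N-1}\leq e^{(N-1)s\cdot\alpha}\leq\bigl(e^{(N-1)s_{\max}}\bigr)^{|\alpha|}$, so $\tilde h:=h_*\,e^{(N-1)s_{\max}}$ closes the estimate. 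The chain of intermediate radii and the appeal to Hartogs are more machinery than strictly needed (Proposition~\ref{prop_diff} returns a function still holomorphic on all of $D_R$ in the differentiated variable, so one can keep the same pair $r<r'$ for every coordinate), but they are harmless.
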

\begin{proof}
After applying $N$ times Proposition \ref{prop_diff} we receive:
\begin{equation*}
\sup_{z\in D^N_{r}}|\partial_{m_1,z_1}^{\alpha_1}\ldots\partial_{m_N,z_N}^{\alpha_N}f(z)|\leq \sup_{z\in D^N_{r'}}|f(z)| h^{|\alpha|}\Gamma(1+s_1\alpha_1)\ldots\Gamma(1+s_N\alpha_N).
\end{equation*}
Moreover, let us note that for any complex $u$, $w$ such that $\Re u\geq 0$ and $\Re w \geq 0$ we have $\Gamma(u)\Gamma(w)\leq\Gamma(u+w)$. Using that fact we receive:
\begin{equation*}
\Gamma(1+s_1\alpha_1)\ldots\Gamma(1+s_N\alpha_N)\leq\Gamma(N+s\cdot\alpha).
\end{equation*}
Using properties of the Gamma function we obtain the following:
\begin{align*}
\Gamma(N+s\cdot\alpha)&=(N-1+s\cdot\alpha)\ldots(1+s\cdot\alpha)\Gamma(1+s\cdot\alpha)\\
 &\leq e^{(N-1)(N-2)/2}e^{(N-2)s\cdot\alpha}\Gamma(1+s\cdot\alpha).
\end{align*}
Let us now use a notation $\bar{s}=\max_{1\leq j\leq N}s_j$. Then
\begin{equation*}
\Gamma(N+s\cdot\alpha)\leq e^{(N-1)(N-2)/2}e^{(N-2)\bar{s}|\alpha|}\Gamma(1+s\cdot\alpha).
\end{equation*}
Since we may assume that $|\alpha|\geq 1$, It is therefore enough to take $\tilde{h}=he^{(N-1)(N-2)/2}e^{(N-2)\bar{s}}$.
\end{proof}

\section{Newton polygon}

\label{s:newton_polygon}
The Newton polygon for linear partial differential operators with variable coefficients was introduced by
Yonemura \cite{Y}, who also described the Gevrey order of solution in terms
 of its Newton polygon. In this way he generalized the previous results of \cite{R} given for ordinary differential equations.
 
 On the other hand the Newton polygon for linear moment partial differential
 operators with constant coefficients in two variables  $(t,z)$ was introduced by the first author in \cite{Mic2}.
 
In this section we extend the notion of the Newton polygon to the case of linear moment partial differential operators with variable coefficients and with multidimensional spatial variable $z\in\CC^N$.

 \begin{figure}[hbt]
\includegraphics[width=12cm]{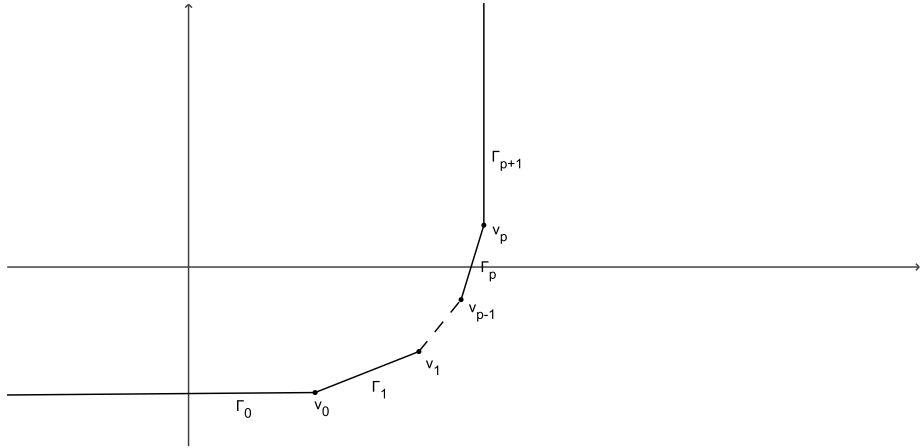} \caption{An example of a Newton polygon.\label{gfx:newton_polygon}}
\end{figure}

Let $m_0,m_1,\dots,m_N$ be moment functions of positive orders $s_0,s_1,\dots,s_N$ respectively.
 We assume that $m=(m_1,\dots,m_N)$, $s=(s_1,\dots,s_N)$, $t\in\CC$, $z=(z_1,\dots,z_N)\in\CC^N$, $\alpha=(\alpha_1,\dots,\alpha_N)\in\NN_0^N$,
 $\Sigma\subseteq\NN_0$ is a set of indices (finite or infinite), $J\subset \NN_0$ and $A\subset \NN_0^N$ are finite set of indices,
 and the moment operator is given by
\begin{equation}
\label{eq:P_op}
 P(t,z,\partial_{m_0,t},\partial_{m,z})=
 \sum_{(\sigma,j,\alpha)\in \Sigma\times J\times A}a_{\sigma j\alpha}(z)
t^{\sigma}\partial_{m_0,t}^j\partial_{m,z}^{\alpha},
\end{equation}
 where we use the multidimensional notation $\partial_{m,z}^{\alpha}:=\partial_{m_1,z_1}^{\alpha_1}\cdots\partial_{m_N,z_N}^{\alpha_N}$.
 
 \begin{defi}
  The \emph{Newton polygon for the operator $P$} given by (\ref{eq:P_op})
  is defined as the convex hull of the union
 of sets
 $Q(js_0+\alpha\cdot s,\sigma-j)$, where 
 $(\sigma,j,\alpha)\in\Sigma\times J\times A$, that is
  \begin{equation*}
N(P,s_0,s)=\textrm{conv}\big\{Q(js_0+\alpha\cdot s,\sigma-j)\colon (\sigma,j,\alpha)\in \Sigma\times J\times A,\ a_{\sigma j\alpha}(z)\not\equiv 0\big\},
 \end{equation*}
 where $Q(a,b)$ denotes the second quadrant of $\RR^2$ translated by the vector $(a,b)$ (i.e. $Q(a,b):=\{(x,y)\colon x\leq a,\ y\geq b\}$ for any $(a,b)\in\RR^2$) and $\alpha\cdot s=\alpha_1s_1+\cdots+\alpha_Ns_N$ is the scalar product of $\alpha$ and $s$.
 \end{defi}

The definition given above is a fairly general one, covering a wide variety of operators. However, from now on we will focus exclusively on the case when coefficients of the moment operator $P$ do not depend on the variable $z$.

An example of a Newton polygon can be seen on Figure \ref{gfx:newton_polygon}.

\section{Formal norms}
In this section we introduce a useful tool, which allows us to keep together estimations of all $\Gamma_s$-moment derivatives of a given holomorphic function.
\begin{defi}
For $f(z)\in \Oo(D^{N}_R)$ let us define
the formal norm of $f$ by the formula: 
\begin{equation}
\|f(z)\|_{\rho}:=\sum_{\alpha\in\NN_0^N}\frac{|\partial_{\Gamma_{s},z}^{\alpha}f(z)|}{\Gamma(1+s\cdot\alpha)}\rho^{\alpha}\label{de:formal_norm}
\end{equation}
where $s=(s_1,\ldots,s_N)$, $\partial^\alpha_{\Gamma_s,z}=\partial^{\alpha_1}_{\Gamma_{s_1},z_1}\ldots\partial^{\alpha_N}_{\Gamma_{s_N},z_N}$,  $\rho=(\rho_1,\ldots,\rho_N)\in\CC^N$ and
$\rho^\alpha={\rho_1}^{\alpha_1}\cdots{\rho_N}^{\alpha_N}$.
\end{defi}

The above definition is a generalization
of a concept used in \cite{TY}. This section is devoted mainly to
presenting properties characterizing formal norms defined by (\ref{de:formal_norm}),
which in many cases are very similar to the results from \cite[Section 4]{TY}.

For any $a\geq0$ and $\rho\in\CC^N$ let us introduce the following formal power series: 
\begin{equation}
\label{eq:theta}
\Theta^{(a)}(\rho):=\sum_{\alpha\in\NN_0^N}\frac{\Gamma(1+s\cdot\alpha+a)}{\Gamma(1+s\cdot\alpha)}\rho^{\alpha}.
\end{equation}

They satisfy

\begin{lemma}
\label{le:4.3}
Let $b\geq 0$. Then for any $a>0$ we have
\begin{equation*}
\Theta^{(b)}(\rho)\ll e\left(\frac{e}{1+a+b}\right)^{a}\Theta^{(a+b)}(\rho).
\end{equation*}
\end{lemma}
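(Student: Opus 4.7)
The plan is to reduce the majorant inequality to a coefficient-wise numerical estimate. Since both formal series $\Theta^{(b)}(\rho)$ and $\Theta^{(a+b)}(\rho)$ have non-negative coefficients, the relation $\Theta^{(b)}(\rho)\ll e(e/(1+a+b))^{a}\,\Theta^{(a+b)}(\rho)$ is equivalent to asking, for every multi-index $\alpha\in\NN_0^N$,
\begin{equation*}
\frac{\Gamma(1+s\cdot\alpha+b)}{\Gamma(1+s\cdot\alpha)}\leq e\Bigl(\frac{e}{1+a+b}\Bigr)^{a}\,\frac{\Gamma(1+s\cdot\alpha+a+b)}{\Gamma(1+s\cdot\alpha)},
\end{equation*}
that is, after clearing the common factor $\Gamma(1+s\cdot\alpha)^{-1}$,
\begin{equation*}
\frac{\Gamma(1+s\cdot\alpha+a+b)}{\Gamma(1+s\cdot\alpha+b)}\geq\frac{(1+a+b)^{a}}{e^{a+1}}.
\end{equation*}

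Next I would apply Lemma \ref{lem:gamma} with the role of the parameter played by $a$ (in place of the symbol $s$ appearing there) and with $x:=s\cdot\alpha+a+b$. The hypothesis $x\geq a$ is immediate since $s\cdot\alpha\geq 0$ and $b\geq 0$. Tracking the explicit constant that comes out of the Stirling-based proof of Lemma \ref{lem:gamma}, namely $\tilde c_{a}=e^{-a-1}$, the lower bound yields
\begin{equation*}
\frac{\Gamma(1+s\cdot\alpha+a+b)}{\Gamma(1+s\cdot\alpha+b)}\geq e^{-a-1}(1+s\cdot\alpha+a+b)^{a}\geq e^{-a-1}(1+a+b)^{a},
\end{equation*}
the last step using $s\cdot\alpha\geq 0$ and $a\geq 0$. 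This is precisely the desired inequality.

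I do not expect a real obstacle: the whole argument is a direct coefficient-wise comparison combined with the sharp lower bound from Lemma \ref{lem:gamma}. The only points to be careful about are notational (the symbol $s$ in Lemma \ref{lem:gamma} is a scalar placeholder, to be taken equal to $a$ here, not the vector $s=(s_1,\dots,s_N)$ from the present section), and the verification of the hypothesis $x\geq a$, which is automatic from $b\geq 0$ and $s\cdot\alpha\geq 0$.
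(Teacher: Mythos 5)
Your proof is correct: the coefficient-wise reduction is exactly right, the application of Lemma \ref{lem:gamma} with parameter $a$ and $x=s\cdot\alpha+a+b$ is legitimate (the hypothesis $x\geq a$ holds, and the explicit constant $\tilde c_a=e^{-a-1}$ does come out of the paper's Stirling argument), and the final bound $e^{-a-1}(1+a+b)^a$ matches the claimed factor $e\bigl(\tfrac{e}{1+a+b}\bigr)^{a}$. The paper omits this proof, deferring to Tahara--Yamazawa, and your argument is essentially the same Stirling-based coefficient comparison used there, so nothing further is needed.
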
 

The proof of this fact is identical to
the one presented in \cite[Lemma 4.3.]{TY} and it will be omitted.

Using (\ref{eq:theta}) we may estimate the formal norm of holomorphic function in the following way:

\begin{lemma}
\label{le:1}
 If $f\in\Oo(D_R^N)$ then for every $r<r'<R$ there exists $h>0$
 such that
 $\sup_{z\in D^N_r}\|f(z)\|_{\rho}\ll C\Theta^{(0)}(h\rho)$,
 where $C=\sup_{z\in D^N_{r'}}|f(z)|$.
\end{lemma}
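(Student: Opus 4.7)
The plan is to reduce the claimed majorant inequality to a coefficient-wise bound which is an immediate consequence of Proposition \ref{prop:general}.

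First I would unfold the right-hand side. By the definition (\ref{eq:theta}) with $a=0$,
$$\Theta^{(0)}(h\rho)=\sum_{\alpha\in\NN_0^N}\frac{\Gamma(1+s\cdot\alpha)}{\Gamma(1+s\cdot\alpha)}h^{|\alpha|}\rho^{\alpha}=\sum_{\alpha\in\NN_0^N}h^{|\alpha|}\rho^{\alpha}.$$
On the other hand, the formal norm (\ref{de:formal_norm}) has non-negative coefficients (absolute values of derivatives), so the supremum over $z\in D_r^N$ acts termwise and preserves the majorant relation. Consequently, the conclusion $\sup_{z\in D_r^N}\|f(z)\|_\rho\ll C\Theta^{(0)}(h\rho)$ is equivalent to the coefficient-wise estimate
$$\sup_{z\in D_r^N}\bigl|\partial^{\alpha}_{\Gamma_s,z}f(z)\bigr|\leq C\,h^{|\alpha|}\Gamma(1+s\cdot\alpha)\qquad\text{for every }\alpha\in\NN_0^N.$$

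Second, this is precisely the content of Proposition \ref{prop:general} applied with the moment functions $m_i=\Gamma_{s_i}$ of order $s_i$ for $i=1,\ldots,N$. That proposition supplies, for the given $r<r'<R$, a constant $\tilde h>0$ (depending only on $r,r',R$ and on the orders $s_1,\dots,s_N$) such that
$$\sup_{z\in D_r^N}\bigl|\partial^{\alpha_1}_{\Gamma_{s_1},z_1}\cdots\partial^{\alpha_N}_{\Gamma_{s_N},z_N}f(z)\bigr|\leq\sup_{z\in D_{r'}^N}|f(z)|\,\tilde h^{|\alpha|}\Gamma(1+s\cdot\alpha)=C\tilde h^{|\alpha|}\Gamma(1+s\cdot\alpha).$$
Choosing $h:=\tilde h$ finishes the proof.

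I do not anticipate a real obstacle here: the lemma is essentially a repackaging of the multidimensional Cauchy-type estimate of Proposition \ref{prop:general} in the language of formal norms, and $\Theta^{(0)}$ was tailor-made so that its coefficients exactly absorb the $\Gamma(1+s\cdot\alpha)$ factor appearing on the right-hand side of that proposition. The only small point worth stating explicitly is that the formal inequality $\ll$ between two series with non-negative coefficients is preserved under taking the supremum of each coefficient in $z$, which is immediate.
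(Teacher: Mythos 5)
Your proof is correct and is exactly the argument the paper intends: the paper's own proof is a one-line reference to the definitions (\ref{de:formal_norm}), (\ref{eq:theta}) and Proposition \ref{prop:general}, and you have simply written out that reduction in full. Nothing is missing.
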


\begin{proof}
 It is sufficient to use (\ref{de:formal_norm}), (\ref{eq:theta}) and Proposition \ref{prop:general}. 
\end{proof}

\begin{lemma}
\label{le:4.2}
Let $f\in \Oo(D^N_R)$ be a function such that 
$\sup_{z\in D^N_r}\|f(z)\|_\rho\ll C\Theta^{(a)}(h\rho)$ for certain constants $r\in(0,R)$, $C,h>0$ and $a\geq 0$. Then for any $\beta\in\NN_0^N$ we have $\sup_{z\in D^N_r}\|\partial^{\beta}_{\Gamma_s}f(z)\|_\rho\ll Ch^{|\beta|}\Theta^{(a+s\cdot\beta)}(h\rho)$.
\end{lemma}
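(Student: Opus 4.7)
The plan is to unwind the definition of the formal norm, observe that applying $\partial^{\beta}_{\Gamma_s,z}$ inside the norm simply shifts the multi-index of differentiation, and then read off the claimed estimate from the hypothesis by relabeling.

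More concretely, I would first use the fact that $\partial^{\alpha}_{\Gamma_s,z}\partial^{\beta}_{\Gamma_s,z} = \partial^{\alpha+\beta}_{\Gamma_s,z}$ (componentwise, because moment differentiation in different variables commutes and in each variable it is just a shift of Taylor coefficients) to rewrite
\begin{equation*}
\|\partial^{\beta}_{\Gamma_s,z}f(z)\|_{\rho}
= \sum_{\alpha\in\NN_0^N}\frac{|\partial^{\alpha+\beta}_{\Gamma_s,z}f(z)|}{\Gamma(1+s\cdot\alpha)}\rho^{\alpha}.
\end{equation*}
Taking the supremum coefficient by coefficient, I need to control each
$\sup_{z\in D^N_r}|\partial^{\alpha+\beta}_{\Gamma_s,z}f(z)|/\Gamma(1+s\cdot\alpha)$.

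The hypothesis $\sup_{z\in D^N_r}\|f(z)\|_{\rho}\ll C\Theta^{(a)}(h\rho)$ means precisely that, for every multi-index $\gamma\in\NN_0^N$,
\begin{equation*}
\sup_{z\in D^N_r}|\partial^{\gamma}_{\Gamma_s,z}f(z)|
\leq C\,\Gamma(1+s\cdot\gamma+a)\,h^{|\gamma|}.
\end{equation*}
Applying this to $\gamma=\alpha+\beta$, noting $|\alpha+\beta|=|\alpha|+|\beta|$, and dividing by $\Gamma(1+s\cdot\alpha)$ gives
\begin{equation*}
\sup_{z\in D^N_r}\frac{|\partial^{\alpha+\beta}_{\Gamma_s,z}f(z)|}{\Gamma(1+s\cdot\alpha)}
\leq C\,h^{|\beta|}\,\frac{\Gamma(1+s\cdot\alpha+s\cdot\beta+a)}{\Gamma(1+s\cdot\alpha)}\,h^{|\alpha|},
\end{equation*}
which is exactly the coefficient of $\rho^{\alpha}$ in $Ch^{|\beta|}\Theta^{(a+s\cdot\beta)}(h\rho)$ by the definition (\ref{eq:theta}). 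Summing over $\alpha$ yields the desired majorant relation.

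No serious obstacle is expected: the argument is a bookkeeping identity combined with the shift $\gamma=\alpha+\beta$. The only point that deserves a careful sentence is the justification of the coefficient-wise inequality from the majorant hypothesis, together with the fact that $\partial^{\beta}_{\Gamma_s,z}$ commutes through the coefficient-by-coefficient supremum (which is immediate since each coefficient is an independent nonnegative quantity, and the operator $\partial^{\beta}_{\Gamma_s,z}$ is applied to the fixed holomorphic function $f$ before taking suprema).
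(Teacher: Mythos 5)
Your argument is correct and is essentially the paper's own proof: both extract the coefficient-wise bound $\sup_{z\in D^N_r}|\partial^{\gamma}_{\Gamma_s,z}f(z)|\leq Ch^{|\gamma|}\Gamma(1+s\cdot\gamma+a)$ from the majorant hypothesis, apply it with $\gamma=\alpha+\beta$, and divide by $\Gamma(1+s\cdot\alpha)$ to read off the coefficients of $Ch^{|\beta|}\Theta^{(a+s\cdot\beta)}(h\rho)$. No issues.
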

\begin{proof}
Since $\|f(z)\|_\rho\ll C\Theta^{(a)}(h\rho)$, for any $\alpha\in\NN_0^N$ and $z\in D_r^N$, it follows that
\begin{equation*}
\sup_{z\in D^N_r}\frac{|\partial_{\Gamma_s,z}^{\alpha}f(z)|}{\Gamma(1+s\cdot\alpha)}\leq\frac{Ch^{|\alpha|}\Gamma(1+s\cdot\alpha+a)}{\Gamma(1+s\cdot\alpha)},
\end{equation*}
which means that $\sup_{z\in D^N_r}|\partial_{\Gamma_s,z}^{\alpha}f(z)|\leq Ch^{|\alpha|}\Gamma(1+s\cdot\alpha+a)$. Seeing as the second inequality holds for all $\alpha\in\NN_0^N$, it is also true for $\bar{\alpha}=\alpha+\beta$. Hence, for all $\alpha\in\NN_0^N$ we have:
\begin{equation*}
\sup_{z\in D^N_r}\frac{|\partial_{\Gamma_s,z}^{\alpha+\beta}f(z)|}{\Gamma(1+s\cdot\alpha)}\leq\frac{Ch^{|\alpha|+|\beta|}\Gamma(1+s\cdot\alpha+s\cdot\beta+a)}{\Gamma(1+s\cdot\alpha)}.
\end{equation*}
\end{proof}

\begin{lemma}
\label{le:4.4}
Let us consider a function $f(t,z)=\sum_{n=0}^{\infty}f_{n}(z)t^{n}\in\Oo_{R}[[t]]_{s}$.
For any $r<R$ there exist constants $A,B,h>0$ such that for any $n\in\NN_{0}$
\begin{equation*}
\sup_{z\in D^N_r}\|f_{n}(z)\|_{\rho}\ll AB^n\Theta^{(0)}(h\rho)n!{}^{s}.
\end{equation*}
\end{lemma}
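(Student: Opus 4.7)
The plan is to assemble two earlier ingredients: the coefficient-wise sup-norm bound furnished by the Gevrey hypothesis $f\in\Oo_R[[t]]_s$, and the formal-norm majorization of a single holomorphic function provided by Lemma~\ref{le:1}.

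First I would translate the Gevrey hypothesis into a uniform growth bound on the coefficients $f_n$. Applying Remark~\ref{rem:gevrey} with the Banach space $\EE=\Oo_R$ to the series $f=\sum f_n(z)t^n$ yields constants $B_0, C_0>0$ such that $\|f_n\|_{\Oo_R}\leq B_0 C_0^n\Gamma_s(n)$ for every $n\in\NN_0$. A direct Stirling computation using (\ref{eq:stirling}) converts $\Gamma_s(n)=\Gamma(1+sn)$ into $(n!)^s$ at the cost of absorbing a geometric factor $(s^s)^n$ into the base, so one gets the cleaner bound
\[
\sup_{z\in D_{r'}^N}|f_n(z)|\leq\|f_n\|_{\Oo_R}\leq B_1 C_1^n (n!)^s
\]
valid for every $r'<R$ and every $n$, with new constants $B_1, C_1>0$.

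Next I would fix an intermediate radius $r<r'<R$ and apply Lemma~\ref{le:1} \emph{individually} to each holomorphic coefficient $f_n\in\Oo(D_R^N)$. This produces a constant $h>0$, depending only on the pair $(r,r')$ and not on $n$, such that
\[
\sup_{z\in D_r^N}\|f_n(z)\|_{\rho}\ll \Big(\sup_{z\in D_{r'}^N}|f_n(z)|\Big)\,\Theta^{(0)}(h\rho).
\]
Substituting the uniform Gevrey bound from the first step yields
\[
\sup_{z\in D_r^N}\|f_n(z)\|_{\rho}\ll B_1 C_1^n (n!)^s\,\Theta^{(0)}(h\rho),
\]
which is precisely the asserted majorization with $A=B_1$ and $B=C_1$.

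The proof is essentially mechanical assembly, so I do not anticipate a genuine obstacle. The only point that needs to be made explicit is the uniformity in $n$ of the constant $h$ from Lemma~\ref{le:1}: inspecting its proof, $h$ comes from Proposition~\ref{prop:general} and depends only on $r$ and $r'$, not on the particular holomorphic function being differentiated, which is exactly what allows the same $h$ to serve for every $f_n$ simultaneously. The secondary subtlety, the passage $\Gamma(1+sn)\rightsquigarrow (n!)^s$, is harmless because the resulting geometric loss $s^{sn}$ is absorbed into the new base $C_1$.
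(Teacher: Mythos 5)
Your proof is correct and takes essentially the same route as the paper: both translate the hypothesis $f\in\Oo_R[[t]]_s$ into the coefficient bound $\sup_{z\in D^N_{r'}}|f_n(z)|\leq AB^n n!^s$ and then apply Lemma~\ref{le:1} to each $f_n$, with the same $h$ serving for all $n$ because $h$ in Proposition~\ref{prop:general} depends only on $r,r'$. Your version merely spells out the two small steps (invoking Remark~\ref{rem:gevrey} with $\EE=\Oo_R$ and converting $\Gamma_s(n)$ to $n!^s$ via Stirling) that the paper compresses into its opening sentence.
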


\begin{proof}
Since $f$ is of a Gevrey order $s$, for
every $r<r'<R$ there exist certain positive constants $A$ and $B$ such that we get
$\sup_{z\in D^N_{r'}}|f_{n}(z)|\leq AB^n n!^{s}$
for any $n\in\NN_{0}$. Using it and Lemma \ref{le:1}, we receive: 
\begin{equation*}
\sup_{z\in D^N_r}\|f_{n}(z)\|_{\rho}\ll
\sup_{z\in D^N_{r'}}|f_{n}(z)|\Theta^{(0)}(h\rho)\ll AB^{n}n!^{s}\Theta^{(0)}(h\rho).
\end{equation*}
\end{proof}

\section{Main problem}

\label{s:main_problem} 

Let us consider the moment differential operator
\begin{equation}
P(\partial_{m_{0},t},\partial_{m_{1},z_{1}},\ldots,\partial_{m_{N},z_{N}})=\partial_{m_{0},t}^{M}+\sum_{(j,\alpha)\in\Lambda}a_{j,\alpha}(t)\partial_{m_{0},t}^{j}\partial_{m,z}^{\alpha},\label{eq:diff_polynomial}
\end{equation}
where $m_0,m_1,\dots,m_N$ are moment functions of positive orders $s_0,s_1,\dots,s_N$ respectively, $m=(m_1,\dots,m_N)$, $s=(s_1,\dots,s_N)$, and $a_{j,\alpha}(t)$ are holomorphic functions in
a neighborhood of the origin.
Then the Newton polygon for (\ref{eq:diff_polynomial}) is given
by the set 
\begin{equation*}
N(P,s_0,s)=\textrm{conv}\left\{ Q(Ms_{0},-M)\cup\bigcup_{(j,\alpha)\in\Lambda}Q(s_{0}j+s\cdot\alpha,\,\ord(a_{j,\alpha})-j)\right\}.
\end{equation*}

For any $i=0,1,\dots,p+1$ let us denote the slope of the segment
$\Gamma_{i}$ by $k_{i}$ with all $k_{i}\geq 0$. It is easy to observe
that $0=k_{0}<k_{1}<\ldots<k_{p+1}=\infty$. If $p>0$ as well, then
we can also calculate the value of $k_{1}$ by the formula
\begin{equation*}
\frac{1}{k_{1}}=\max_{(j,\alpha)\in\Lambda}\left\{ \frac{s_{0}(j-M)+s\cdot\alpha}{\ord(a_{j,\alpha})-j+M}\right\}.
\end{equation*}
Hence, in the general case, for $q_{j,\alpha}:=\ord(a_{j,\alpha})-j+M$, we get
\begin{equation*}
\frac{1}{k_{1}}=\max\left\{ 0,\,\max_{(j,\alpha)\in\Lambda}\left\{ \frac{s_{0}(j-M)+s\cdot\alpha}{q_{j,\alpha}}\right\} \right\}.
\end{equation*}

Let us return to our main equation (\ref{eq:main}). We shall further
assume that: 
\begin{equation}
 \label{eq:cond_1}\phi_j\in\Oo_{\tilde{R}}\ (j=0,\dots,M-1)\ \textrm{and}\ f(t,z)\in\Oo_{\tilde{R}}[[t]]_{1/k_{1}}\ \textrm{for certain}\ \tilde{R}>0,
\end{equation}
\begin{equation}
 \label{eq:cond_2}
 \textrm{the set}\ \Lambda\subset\NN_{0}\times\NN_{0}^{N}\ \textrm{is finite},
\end{equation}
\begin{equation}
 \label{eq:cond_3}
 a_{j,\alpha}(t)\in\CC[[t]]_{1/k_{1}}\
\textrm{for any}\ (j,\alpha)\in\Lambda.
\end{equation}
We will also assume 
that 
\begin{equation}
\label{cond:orda}
\ord(a_{j,\alpha})\geq\max\{0,\,j-M+1\}\ \textrm{for any}\ (j,\alpha)\in\Lambda
\end{equation}
in order to ensure that (\ref{eq:main}) has a unique formal solution.

Additionally we assume that 
\begin{equation}
\label{eq:cond_5}
m_{0}\ \textrm{is a regular moment function}.
\end{equation}
We can then change the form of (\ref{eq:main}) using the composition of Borel transforms
of order $0$ with respect to $z_1,\dots,z_N$ given by: 
\begin{equation}
 \label{eq:composition}
\Bo_{\frac{\Gamma_{s}}{m},z}:=\Bo_{\frac{\Gamma_{s_{1}}}{m_{1}},z_{1}}\dots\Bo_{\frac{\Gamma_{s_{N}}}{m_{N}},z_{N}}.
\end{equation}
Since (\ref{eq:main}) is a linear equation with coefficients
which do not depend on $z$, by Proposition \ref{prop:commutation} we receive an equation equivalent to (\ref{eq:main}):
\begin{equation}
\left\{ \begin{aligned}\partial_{m_{0},t}^{M}v(t,z)+\sum_{(j,\alpha)\in\Lambda}a_{j,\alpha}(t)\partial_{m_{0},t}^{j}\partial_{\Gamma_{s},z}^{\alpha}v(t,z) & =g(t,z)\\
\partial_{m_{0},t}^{j}v(0,z) & =\psi_{j}(z)\textrm{ for }0\leq j<M
\end{aligned}
\right.,\label{eq:borel}
\end{equation}
where $v=\Bo_{\frac{\Gamma_{s}}{m},z}u$, $g=\Bo_{\frac{\Gamma_{s}}{m},z}f$ and $\psi_{j}=\Bo_{\frac{\Gamma_{s}}{m},z}\phi_{j}$
for $j=0,1,\dots,M-1$. By Remark \ref{rem:gevrey}
there exists $R>0$ such that
$\psi_j\in\Oo_{R}$ ($j=0,\dots,M-1$) and 
$g\in\Oo_{R}[[t]]_{1/k_{1}}$.

First we show that the formal solution of (\ref{eq:borel}) is of a Gevrey order $1/k_1$. To this end we will use the formal norms and their properties.

\begin{lemma}\label{lemma_bound} Let $\hat{v}(t,z)=\sum_{n=0}^{\infty}v_{n}(z)t^{n}$
be a formal solution of (\ref{eq:borel}). Then for every $r<R$ there exist constants $C,H,h>0$
such that for any $n\in\NN_0$ and $z\in D_{r}$ we have: 
\begin{equation}
\|v_{n}(z)\|_{\rho}\ll\frac{CH^{n}}{n!^{Ms_{0}}}\Theta^{(dn)}(h\rho),\label{eq:v_n-norm-bound}
\end{equation}
where $d=Ms_{0}+\frac{1}{k_{1}}.$
\end{lemma}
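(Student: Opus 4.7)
The plan is strong induction on $n$.

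\emph{Recurrence and base case.} Extracting the coefficient of $t^n$ in (\ref{eq:borel}), for $n\geq M$ we obtain an explicit formula expressing $v_n(z)$ as a linear combination of $v_0(z),\dots,v_{n-1}(z)$ and $g_{n-M}(z)$, whose coefficients involve the ratios $m_0(k+j)/m_0(n)$ and the Taylor coefficients $a_{j,\alpha,l}$ of $a_{j,\alpha}$, indexed by $(j,\alpha)\in\Lambda$ and $k+l=n-M$. Condition (\ref{cond:orda}) guarantees that only indices $k+j\leq n-1$ appear, so the recurrence is well-defined and the formal solution is unique. For $0\leq n<M$ the initial data gives $v_n(z)=\psi_n(z)/m_0(n)$, and Lemma~\ref{le:1} combined with Lemma~\ref{le:4.3} shows that (\ref{eq:v_n-norm-bound}) holds at these finitely many indices once $C$ and $H$ are chosen large enough.

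\emph{Inductive step.} Assume (\ref{eq:v_n-norm-bound}) for all indices below $n\geq M$. We bound each term in the recurrence by combining four ingredients: Lemma~\ref{le:4.4} applied to $g\in\Oo_R[[t]]_{1/k_1}$, which gives $\sup_{z\in D_r^N}\|g_{n-M}\|_\rho\ll A_0B_0^{n-M}(n-M)!^{1/k_1}\Theta^{(0)}(h\rho)$; the Gevrey condition (\ref{eq:cond_3}), which gives $|a_{j,\alpha,l}|\leq A_1B_1^l\, l!^{1/k_1}$; the regularity (\ref{eq:cond_5}) of $m_0$, which yields $m_0(k+j)/m_0(n)\leq a_0^{-(n-k-j)}(k+j)!^{s_0}/n!^{s_0}$; and Lemma~\ref{le:4.2} together with the inductive hypothesis, which bounds $\sup_{z\in D_r^N}\|\partial_{\Gamma_s,z}^\alpha v_{k+j}(z)\|_\rho$ by $CH^{k+j}h^{|\alpha|}(k+j)!^{-Ms_0}\Theta^{(d(k+j)+s\cdot\alpha)}(h\rho)$. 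Each resulting $\Theta^{(d(k+j)+s\cdot\alpha)}(h\rho)$ is then lifted to the common factor $\Theta^{(dn)}(h\rho)$ via Lemma~\ref{le:4.3}.

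\emph{Newton polygon inequality.} The preceding application of Lemma~\ref{le:4.3} requires $d(n-k-j)-s\cdot\alpha\geq 0$. Writing $n-k-j=M-j+l$ with $l\geq\ord(a_{j,\alpha})$ and using the defining slope relation $1/k_1\geq(s_0(j-M)+s\cdot\alpha)/(\ord(a_{j,\alpha})-j+M)$ that holds for every $(j,\alpha)\in\Lambda$, a short algebraic manipulation gives
\begin{equation*}
d(n-k-j)-s\cdot\alpha\;\geq\;(M-1)s_0(M-j)+Ms_0\,l+\frac{l-\ord(a_{j,\alpha})}{k_1}\;\geq\;0.
\end{equation*}
This is the essential step where the geometry of the Newton polygon enters.

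\emph{Closing the induction.} Collecting all estimates, each summand in the recurrence is majorized by $\frac{CH^n}{n!^{Ms_0}}\Theta^{(dn)}(h\rho)$ multiplied by a factor of geometric type $(B_1/(a_0H))^l$ with prefactors independent of $n$, any residual polynomial growth in $n$ being absorbed through the bound $l!^{1/k_1}n^{-l/k_1}\leq 1$ for $l\leq n-M$ together with the decay coming from the displayed inequality above. Since $|\Lambda|$ is finite and the sum over $l$ is a convergent geometric series for $H$ large, $H$ can be chosen so that the total contribution of the $\Lambda$-sum is at most $\tfrac12\cdot\frac{CH^n}{n!^{Ms_0}}\Theta^{(dn)}(h\rho)$; the forcing term $g_{n-M}$ is treated analogously, and $C$ is enlarged, if necessary, to secure the base case. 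The main obstacle is the Newton polygon inequality above, together with the careful bookkeeping required to ensure that $H$ can be chosen uniformly in $n$ while still dominating every term of the recurrence.
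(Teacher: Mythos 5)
Your strategy coincides with the paper's: the same recurrence obtained by extracting the coefficient of $t^{n-M}$, the same strong induction, the same use of Lemmas \ref{le:4.2}, \ref{le:4.3}, \ref{le:4.4} and of the regularity of $m_0$, and your displayed inequality is exactly the paper's Newton polygon input $s_0(j-M)+s\cdot\alpha\leq q_{j,\alpha}/k_1$ with $q_{j,\alpha}=\ord(a_{j,\alpha})-j+M$. The gap is in the closing bookkeeping, specifically in the bound you take for the Taylor coefficients of $a_{j,\alpha}$. Put $\nu=\ord(a_{j,\alpha})$ and tally the powers of $n$ in one summand: $n^{l/k_1}$ from $l!^{1/k_1}$, $n^{js_0}$ from $m_0(k+j)/m_0(k)$, $n^{Ms_0(n-k-j)}$ from replacing $(k+j)!^{-Ms_0}$ by $n!^{-Ms_0}$, and $n^{-Ms_0}$ from $m_0(n-M)/m_0(n)$, for a total exponent of $l/k_1+js_0-Ms_0+Ms_0(M-j+l)$. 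Lemma \ref{le:4.3} can absorb at most $n^{d(n-k-j)-s\cdot\alpha}$ when lifting $\Theta^{(d(k+j)+s\cdot\alpha)}$ to $\Theta^{(dn)}$, and the difference of the two exponents equals $s_0(j-M)+s\cdot\alpha-(M-j)/k_1$, which the Newton polygon condition bounds only by $\nu/k_1$, not by $0$. This leftover $n^{\nu/k_1}$ is independent of $l$, so it is not beaten by the geometric factor in $l$ and cannot be removed by enlarging $H$ or $C$: the induction fails to close whenever some $a_{j,\alpha}$ with $\ord(a_{j,\alpha})>0$ attains the maximum defining $1/k_1$ (e.g.\ $\partial_{m_0,t}u=t\,\partial^{3}_{m_1,z}u$ with $s_0=s_1=1$, where a full factor of $n$ survives each step).

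The repair is precisely where the paper is more careful: it expands $t^{M-j}a_{j,\alpha}(t)=\sum_{p\geq q_{j,\alpha}}c_{j,\alpha,p}t^p$ and uses the shifted Gevrey bound $|c_{j,\alpha,p}|\leq A_{j,\alpha}B^{p}(p-q_{j,\alpha})!^{1/k_1}$, i.e.\ in your indexing $|a_{j,\alpha,l}|\leq AB^{l}(l-\nu)!^{1/k_1}$. This is still a consequence of (\ref{eq:cond_3}), since $l!/(l-\nu)!\leq l^{\nu}$ grows only polynomially in $l$ and can be pushed into $B^{l}$; with $(l-\nu)!^{1/k_1}\leq n^{(l-\nu)/k_1}$ the exponent difference becomes $s_0(j-M)+s\cdot\alpha-q_{j,\alpha}/k_1\leq 0$ and the induction closes. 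The rest of your sketch (base case, uniqueness via (\ref{cond:orda}), treatment of the forcing term) is consistent with the paper's argument.
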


Before we can move on to the proof of this fact, we will present an additional technical lemma:
\begin{lemma}\label{lemma_factorial}
Let $M\in\NN$. Then the following formula holds true:
\begin{equation}
\frac{(n-M)!}{n!}\leq\left(\frac{M}{n}\right)^M\textrm{ for all }n\geq M\label{factorial_boundary}
\end{equation}
\end{lemma}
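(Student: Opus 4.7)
The plan is to rewrite the left side of \eqref{factorial_boundary} as a reciprocal of a falling factorial and bound each factor from below. Writing
\[
\frac{(n-M)!}{n!} = \frac{1}{n(n-1)(n-2)\cdots(n-M+1)} = \frac{1}{\prod_{k=0}^{M-1}(n-k)},
\]
the claim is equivalent to showing $\prod_{k=0}^{M-1}(n-k) \geq n^M/M^M$.

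The key pointwise inequality I would prove is
\[
n-k \;\geq\; \frac{n(M-k)}{M} \qquad \text{for every } k\in\{0,1,\dots,M-1\} \text{ and } n\geq M.
\]
This is elementary: clearing the denominator, it becomes $M(n-k)\geq n(M-k)$, i.e.\ $k(n-M)\geq 0$, which holds since $n\geq M$ and $k\geq 0$.

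Multiplying these $M$ inequalities gives
\[
\prod_{k=0}^{M-1}(n-k) \;\geq\; \prod_{k=0}^{M-1}\frac{n(M-k)}{M} \;=\; \frac{n^M}{M^M}\prod_{k=0}^{M-1}(M-k) \;=\; \frac{n^M\,M!}{M^M} \;\geq\; \frac{n^M}{M^M},
\]
where the last step uses $M!\geq 1$. Taking reciprocals yields \eqref{factorial_boundary}. There is essentially no obstacle here; the only thing to spot is the right pointwise bound on $n-k$, and once it is in hand the rest is a one-line multiplication. An alternative route would be a short induction on $M$ (with base $M=1$ giving $(n-1)!/n!=1/n$ and an inductive step that reduces to checking $(1+1/(n-1))^M \leq (M+1)(1+1/M)^M$), but the direct product argument is cleaner and avoids any case analysis.
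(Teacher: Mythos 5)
Your proof is correct and takes a genuinely different route from the paper's. The paper argues by induction on $M$: the base case $M=1$ is immediate, and the inductive step, after clearing denominators, reduces to checking that $M\bigl(\tfrac{M+1}{M}\bigr)^{M} - M - 1 \geq 0$, which follows from the Bernoulli-type bound $\bigl(\tfrac{M+1}{M}\bigr)^{M}\geq\tfrac{M+1}{M}$. You instead give a direct argument: rewrite $\tfrac{(n-M)!}{n!}$ as the reciprocal of the falling factorial $\prod_{k=0}^{M-1}(n-k)$, bound each factor from below by the pointwise inequality $n-k\geq \tfrac{n(M-k)}{M}$ (equivalent to $k(n-M)\geq 0$), and multiply. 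This avoids the induction bookkeeping and the worst-case analysis at $n=M+1$, and it actually delivers the stronger bound $\tfrac{(n-M)!}{n!}\leq \tfrac{1}{M!}\bigl(\tfrac{M}{n}\bigr)^{M}$, of which the stated inequality is a weakening by the harmless factor $M!\geq 1$. Both approaches are valid; the direct product argument is the shorter and more transparent of the two, while the paper's induction is perhaps more mechanical to discover but requires more algebra in the step.
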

\begin{proof}
We shall prove this fact using induction with respect to $M$.

Firstly, let us assume that $M=1$. After substituting $1$ for $M$ in (\ref{factorial_boundary}) we receive an inequality
\begin{equation*}
\frac{1}{n}\leq\frac{1}{n},
\end{equation*}
which is obviously true for all $n\in\NN$.

Now suppose that
\begin{equation*}
\frac{(n-K)!}{n!}\leq\left(\frac{K}{n}\right)^K\textrm{ for all }n\geq K
\end{equation*}
holds true for any $K\leq M$. We shall prove that it is true for $K=M+1$ as well. Let us take $n\geq M+1$. Then:
\begin{equation*}
\frac{(n-M-1)!}{n!}=\frac{(n-M)!}{n!(n-M)}\leq\frac{1}{n-M}\left(\frac{M}{n}\right)^M.
\end{equation*}
It is enough to show that
\begin{equation*}
\frac{1}{n-M}\left(\frac{M}{n}\right)^M\leq\left(\frac{M+1}{n}\right)^M\frac{M}{n},
\end{equation*}
which is equivalent to the inequality
\begin{equation*}
n\left[M\left(\frac{M+1}{M}\right)^M-1\right]\geq M^2\left(\frac{M+1}{M}\right)^M.
\end{equation*}
Seeing as $n\geq M+1$, to prove the last formula it is enough to show that
\begin{equation*}
\frac{M^2\left(\frac{M+1}{M}\right)^M}{M\left(\frac{M+1}{M}\right)^M-1}\leq M+1.
\end{equation*}
This last inequality is equivalent to the following one:
\begin{equation*}
M\left(\frac{M+1}{M}\right)^M-M-1\geq 0,
\end{equation*}
which is true for all $M\geq 1$, because $\left(\frac{M+1}{M}\right)^M\geq\frac{M+1}{M}$.
\end{proof}

\begin{proof}[Proof of Lemma \ref{lemma_bound}]
For $n\leq M-1$ we have $v_{n}(z)=\frac{\psi_{n}(z)}{m_{0}(n)}$
and (\ref{eq:v_n-norm-bound}) holds. Assume then that (\ref{eq:v_n-norm-bound}) (with $n$ replaced by $i$)
is true for all $i\leq n-1$. We shall show the same for $n$. To
this end, first we note that if $v(t,z)=\sum_{n=0}^{\infty}v_{n}(z)t^{n}$
then: 
\begin{equation*}
\sum_{n=0}^{\infty}v_{n}(z)\partial_{m_{0},t}^{M}t^{n}+\sum_{(j,\alpha)\in\Lambda}a_{j,\alpha}(t)\sum_{n=0}^{\infty}(\partial_{\Gamma_{s},z}^{\alpha}v_{n}(z))\partial_{m_{0},t}^{j}t^{n}=g(t,z).
\end{equation*}
After differentiating with respect to $t$ and multiplying both sides
of the equation by $t^{M}$ we receive a formula: 
\begin{equation*}
\sum_{n=0}^{\infty}v_{n}(z)\frac{m_{0}(n)}{m_{0}(n-M)}t^{n}+\sum_{(j,\alpha)\in\Lambda}t^{M-j}a_{j,\alpha}(t)\sum_{n=0}^{\infty}\partial_{\Gamma_{s},z}^{\alpha}v_{n}(z)\frac{m_{0}(n)}{m_{0}(n-j)}t^{n}=t^{M}g(t,z).
\end{equation*}
Seeing as $t^{M}g(t,z)=\sum_{n=M}^{\infty}g_{n}(z)t^{n}$ and $t^{M-j}a_{j,\alpha}(t)=\sum_{p=q_{j,\alpha}}^{\infty}c_{j,\alpha,p}t^{p}$ for every $(j,\alpha)\in\Lambda$, we receive for all
$n\geq M$: 
\begin{equation}
\label{eq:v_n}
v_{n}(z)=\frac{m_{0}(n-M)}{m_{0}(n)}\left[g_{n}(z)-\sum_{(j,\alpha)\in\Lambda}\sum_{p=q_{j,\alpha}}^{n}c_{j,\alpha,p}\frac{m_{0}(n-p)}{m_{0}(n-p-j)}\partial_{\Gamma_{s},z}^{\alpha}v_{n-p}(z)\right],
\end{equation}
assuming the convention that whenever $n-p-j\leq0$, the term 
$
\frac{m_{0}(n-p)}{m_{0}(n-p-j)}
$
is equal to $0$. Please note that from (\ref{cond:orda}) it follows that $q_{j,\alpha}\geq 1$ for any $(j,\alpha)\in\Lambda$. Consequently $v_n$ given by (\ref{eq:v_n}) are coefficients of a unique formal solution of (\ref{eq:borel}).

Because  $a_{j,\alpha}(t)\in\CC[[t]]_{1/k_{1}}$ for all $(j,\alpha)\in\Lambda$,
there exist constants $A_{j,\alpha},B>0$ such that
$\left|c_{j,\alpha,p}\right|\leq A_{j,\alpha}B^{p}(p-q_{j,\alpha})!^{1/k_{1}}$
for any $p\geq q_{j,\alpha}$. Additionally, since $t^{M}g(t,z)\in\Oo_{R}[[t]]_{1/k_1}$, by Lemma \ref{le:4.4} we may
assume that there exist also constants $K,h>0$
such that $\|g_n(z)\|_{\rho}\ll KB^n\Theta^{(0)}(h\rho)n!^{1/k_1}$  for any $z\in D_r^N$ and $n\in\NN_0$.
So for any $z\in D_r^N$ we can estimate: 
\begin{multline*}
\|v_{n}(z)\|_{\rho} \ll\frac{(n-M)!^{s_{0}}}{a^{M}n!^{s_{0}}}\Big[KB^{n}\Theta^{(0)}(h\rho)n!^{1/k_{1}}+\\
\sum_{(j,\alpha)\in\Lambda}\sum_{p=q_{j,\alpha}}^{n}A_{j,\alpha}A^{j}B^p(p-q_{j,\alpha})!^{1/k_{1}}
\frac{(n-p)!^{s_{0}}}{(n-p-j)!^{s_{0}}}\|\partial_{\Gamma_s,z}^{\alpha}v_{n-p}(z)\|_\rho\Big].
\end{multline*}
By Lemma \ref{le:4.2} and by the inductive assumption we get
$$\|\partial_{\Gamma_s,z}^{\alpha}v_{n-p}(z)\|_\rho\ll
\frac{CH^{n-p}h^{|\alpha|}}{(n-p)!^{Ms_0}}\Theta^{(d(n-p)+s\cdot\alpha)}(h\rho)\quad \textrm{for any}\quad z\in D^N_r.$$ Hence, continuing our estimation we see that
\begin{multline*}
\|v_{n}(z)\|_{\rho} \ll\frac{(n-M)!^{s_{0}}}{a^{M}n!^{s_{0}}}\Big[KB^{n}\Theta^{(0)}(h\rho)n!^{1/k_{1}}+\\\sum_{(j,\alpha)\in\Lambda}\sum_{p=q_{j,\alpha}}^{n}\frac{A_{j,\alpha}A^{j}B^p(p-q_{j,\alpha})!^{1/k_{1}}(n-p)!^{s_{0}}CH^{n-p}h^{|\alpha|}}{(n-p-j)!^{s_{0}}(n-p)!^{Ms_{0}}}\Theta^{(d(n-p)+s\cdot\alpha)}(h\rho)\Big].
\end{multline*}

Using Lemma \ref{lemma_factorial} we receive: 
\begin{equation*}
\|v_{n}(z)\|_{\rho}\ll\frac{M^{Ms_{0}}}{a^{M}n^{Ms_{0}}}\left[KB^n\Theta^{(0)}(h\rho)n!^{1/k_{1}}+I\right],
\end{equation*}
where 
\begin{equation*}
I=\sum_{(j,\alpha)\in\Lambda}\sum_{p=q_{j,\alpha}}^{n}\frac{A_{j,\alpha}A^{j}B^p(p-q_{j,\alpha})!^{1/k_{1}}(n-p)!^{s_{0}}CH^{n-p}h^{|\alpha|}}{(n-p-j)!^{s_{0}}(n-p)!^{Ms_{0}}}\Theta^{(d(n-p)+s\cdot\alpha)}(h\rho).
\end{equation*}

Furthermore, let us note that by Lemma \ref{le:4.3}: 
\begin{align*}
\frac{KM^{Ms_{0}}B^n}{a^{M}n^{Ms_{0}}}
\Theta^{(0)}(h\rho)n!^{1/k_{1}} & =\frac{KM^{Ms_{0}}B^n}{a^{M}}\frac{(n^{s_{0}})^{Mn-M}}{(n^{s_{0}})^{Mn}}\Theta^{(0)}(h\rho)n!^{1/k_{1}}\\
 & \ll\frac{KM^{Ms_{0}}B^n(n^{n})^{Ms_{0}+1/k_{1}}}{a^{M}n!^{Ms_{0}}}\Theta^{(0)}(h\rho)\\
 & =\frac{KM^{Ms_{0}}B^n}{a^{M}n!^{Ms_{0}}}n^{dn}\Theta^{(0)}(h\rho)\\
 & \ll\frac{KM^{Ms_{0}}B^n}{a^{M}n!^{Ms_{0}}}e\left(\frac{en}{1+dn}\right)^{dn}\Theta^{(dn)}(h\rho)\\
 & \ll\frac{KM^{Ms_{0}}e}{a^{M}n!^{Ms_{0}}}\left(\frac{e^{d}B}{d^{d}}\right)^{n}\Theta^{(dn)}(h\rho).
\end{align*}

It is enough to choose $C\geq\frac{2KM^{Ms_{0}}e}{a^{M}}$ and
$H\geq\frac{e^{d}B}{d^{d}}$, for $\frac{1}{2}\frac{CH^{n}}{n!^{Ms_{0}}}\Theta^{(dn)}(h\rho)$
to be a majorant of $\frac{KM^{Ms_{0}}B^n}{a^{M}n^{Ms_{0}}}\Theta^{(0)}(h\rho)n!^{1/k_{1}}$.

Moreover, note that $\frac{(n-p)!}{(n-p-j)!}\leq n^{j}$ and $\frac{1}{(n-p)!}\leq\frac{n^{p}}{n!}$,
and because of these facts we receive the inequality: 
\begin{equation*}
\frac{(n-p)!^{s_{0}}}{(n-p-j)!^{s_{0}}}\frac{1}{(n-p)!^{Ms_{0}}}\leq\frac{n^{s_{0}(Mp+j)}}{n!^{Ms_{0}}}.
\end{equation*}
Analogously for $q_{j,\alpha}\leq p\leq n$ we estimate $(p-q_{j,\alpha})!^{1/k_1}\leq n^{(p-q_{j,\alpha})/k_{1}}$.

Using these inequalities and the fact that $s_{0}(j-M)\leq \frac{q_{j,\alpha}}{k_{1}}-s\cdot\alpha$
for any $(j,\alpha)\in\Lambda$, we conclude that: 
\begin{align*}
\frac{1}{n^{Ms_{0}}}I & \ll\frac{CH^{n}}{n!^{Ms_{0}}}\sum_{(j,\alpha)\in\Lambda}\sum_{p=q_{j,\alpha}}^{n}\frac{A_{j,\alpha}A^{j}h^{|\alpha|}B^{p}}{H^{p}}n^{s_{0}(Mp+j-M)+(p-q_{j,\alpha})/k_{1}}\\
 &\qquad\qquad\qquad\qquad\qquad\qquad\times\Theta^{(d(n-p)+s\cdot\alpha)}(h\rho)\\
 & \ll\frac{CH^{n}}{n!^{Ms_{0}}}\sum_{(j,\alpha)\in\Lambda}\sum_{p=q_{j,\alpha}}^{n}\frac{A_{j,\alpha}A^{j}h^{|\alpha|}B^p}{H^{p}}n^{dp-s\cdot\alpha}\Theta^{(d(n-p)+s\cdot\alpha)}(h\rho).
\end{align*}
Hence by Lemma \ref{le:4.3}: 
\begin{align*}
\frac{M^{Ms_{0}}}{a^{M}n^{Ms_{0}}}I & \ll\frac{CH^{n}}{n!^{Ms_{0}}}\sum_{(j,\alpha)\in\Lambda}\sum_{p=q_{j,\alpha}}^{n}\frac{A_{j,\alpha}A^{j}h^{|\alpha|}B^p}{a^{M}M^{-Ms_{0}}H^{p}}e\left(\frac{en}{1+dn}\right)^{dp-s\cdot\alpha}\Theta^{(dn)}(h\rho)\\
 & \ll\frac{CH^{n}}{n!^{Ms_{0}}}\sum_{(j,\alpha)\in\Lambda}\sum_{p=q_{j,\alpha}}^{\infty}\frac{eA_{j,\alpha}A^{j}h^{|\alpha|}}{a^{M}M^{-Ms_{0}}}\left(\frac{d}{e}\right)^{s\cdot\alpha}\left(\frac{e^{d}B}{Hd^{d}}\right)^{p}\Theta^{(dn)}(h\rho)\\
 & \ll\frac{CH^{n}}{n!^{Ms_{0}}}\Theta^{(dn)}(h\rho)\sum_{(j,\alpha)\in\Lambda}\frac{eA_{j,\alpha}A^{j}h^{|\alpha|}}{a^{M}M^{-Ms_{0}}}\left(\frac{d}{e}\right)^{s\cdot\alpha}\frac{\left(\frac{e^{d}B}{Hd^{d}}\right)^{q_{j,\alpha}}}{1-\frac{e^{d}B}{Hd^{d}}}.
\end{align*}
which can be bounded from
above by $\frac{1}{2}\frac{CH^{n}}{n!^{Ms_{0}}}\Theta^{(dn)}(h\rho)$
for sufficiently large $H$.
\end{proof}

\begin{prop}
\label{prop:estimation}
Let $\hat{v}(t,z)=\sum_{n=0}^{\infty}v_{n}(z)t^{n}$
be a formal solution of (\ref{eq:borel}). Then $\hat{v}(t,z)$ is of Gevrey order $1/k_1$ with respect to $t$, i.e. for any $r<R$ there exist constants
$\tilde{C},\tilde{H}>0$ such that 
\begin{equation*}
\sup_{z\in D_r^N}|v_{n}(z)|\leq \tilde{C}\tilde{H}^{n}n!^{1/k_{1}}\quad\textrm{for any}\quad n\in\NN_{0}.
\end{equation*}
\end{prop}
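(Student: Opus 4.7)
The plan is to extract the claimed pointwise estimate on $|v_n(z)|$ directly from the formal-norm majorization of Lemma \ref{lemma_bound}, by reading off the constant coefficients in $\rho$ on both sides. Viewed as a formal series in $\rho$, the formal norm $\|v_n(z)\|_\rho=\sum_{\alpha\in\NN_0^N}\frac{|\partial_{\Gamma_s,z}^\alpha v_n(z)|}{\Gamma(1+s\cdot\alpha)}\rho^\alpha$ has $\alpha=0$ coefficient equal to $|v_n(z)|$ (since $\Gamma(1)=1$), while $\Theta^{(dn)}(h\rho)$ from (\ref{eq:theta}) has $\alpha=0$ coefficient equal to $\Gamma(1+dn)$. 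Because $\ll$ is coefficient-wise domination of formal series in $\rho$, the first step is simply to compare these two constants in Lemma \ref{lemma_bound}, which yields
\[
|v_n(z)|\leq\frac{CH^n\Gamma(1+dn)}{n!^{Ms_0}}\qquad(z\in D_r^N,\; n\in\NN_0),
\]
with $d=Ms_0+1/k_1$.

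The remaining and only nontrivial step is to convert this scalar inequality into a Gevrey-$1/k_1$ bound. Using the identity $n!^{Ms_0}\cdot n!^{1/k_1}=n!^d=\Gamma(1+n)^d$, the desired estimate is equivalent to
\[
\frac{\Gamma(1+dn)}{\Gamma(1+n)^d}\leq K L^n
\]
for suitable constants $K,L>0$. I will verify this via the Stirling estimate (\ref{eq:stirling}): the left-hand side behaves asymptotically like $d^{dn}$ up to a factor at most polynomial in $n$, so enlarging $L$ slightly beyond $d^d$ absorbs those polynomial corrections. Setting $\tilde{C}=CK$ and $\tilde{H}=HL$ then delivers the conclusion.

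All the serious work has already been carried out inside Lemma \ref{lemma_bound}; the present proposition is essentially a scalar corollary. The only point requiring any care is ensuring the Stirling comparison is uniform in $n$ for all admissible $d>0$, in particular for $d<1$, where the polynomial Stirling correction $(1+dn)^{1/2}/(1+n)^{d/2}$ grows rather than decays; however, such polynomial growth is harmless, since it is dominated by the exponential factor $L^n$ after a minor enlargement of $L$. Hence no real obstacle is expected.
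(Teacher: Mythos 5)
Your proposal is correct and is essentially the same as the paper's proof: the paper writes $|v_n(z)|=\|v_n(z)\|_0$, then applies Lemma \ref{lemma_bound} together with $\Theta^{(dn)}(0)=\Gamma(1+dn)$ and the bound $\Gamma(1+dn)\leq BL^n n!^{d}$. Your ``read off the $\alpha=0$ coefficient'' step is the same as evaluating at $\rho=0$, and your Stirling discussion just fills in the detail the paper leaves implicit in that last Gamma estimate.
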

\begin{proof}
Let us note that $|v_{n}(z)|=\|v_{n}(z)\|_{0}$, where $0$ is the zero vector in $\CC^N$. 
Then we can use Lemma \ref{lemma_bound} to conclude that: 
\begin{align*}
|v_{n}(z)| & \leq\frac{CH^{n}}{n!^{Ms_{0}}}\Theta^{(dn)}(0)=\frac{CH^{n}}{n!^{Ms_{0}}}\Gamma(1+dn)\\
 & \leq\frac{CH^{n}}{n!^{Ms_{0}}}BL^{n}n!^{d}=\tilde{C}\tilde{H}^{n}n!^{1/k_{1}},
\end{align*}
which finishes the proof.
\end{proof}

We would like to find the similar result for the formal
solution $\hat{u}(t,z)$ of (\ref{eq:main}).
To this end we need the stronger version of Proposition \ref{prop:estimation}, where the series $v_n(z)=\sum_{|l|=0}^{\infty}v_{nl}z^l$, with $l=(l_1,\ldots,l_N)\in\NN_0^N$, is replaced by its majoring
series $M[v_n](z):=\sum_{|l|=0}^{\infty}|v_{nl}|z^l$.
\begin{prop}
 \label{pr:major}
 Let $\hat{v}(t,z)=\sum_{n=0}^{\infty}v_{n}(z)t^{n}$
be a formal solution of (\ref{eq:borel}). Then for every $r<R$ there exist constants $C,H>0$
such that: 
\begin{equation*}
\sup_{z\in D_r^N}|M[v_{n}](z)|\leq CH^{n}n!^{1/k_{1}}\quad\textrm{for any}\quad n\in\NN_{0},
\end{equation*}
where $d=Ms_{0}+\frac{1}{k_{1}}.$
\end{prop}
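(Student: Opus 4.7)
The plan is to derive Proposition \ref{pr:major} directly from Proposition \ref{prop:estimation} via Cauchy's coefficient inequalities on a slightly enlarged polydisc. The key observation is elementary: if $v_n(z) = \sum_{l \in \NN_0^N} v_{n,l} z^l$ lies in $\Oo(D_R^N)$, then on any strictly smaller polydisc $D_r^N$ with $r < r' < R$, the majorant series $M[v_n](z)$ is bounded by $\sup_{D_{r'}^N}|v_n|$ up to a geometric constant depending only on $r$, $r'$ and $N$. In particular, no further use of formal norms or of the Newton polygon is needed at this stage; the analytic substance has already been spent on Lemma \ref{lemma_bound}.

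First I would check that $v_n \in \Oo(D_R^N)$ for every $n$, arguing by induction from the recurrence \eqref{eq:v_n}: the initial data $v_j = \psi_j/m_0(j)$ ($0 \leq j < M$) and the source coefficients $g_n$ lie in $\Oo_R$, while each step of the recurrence involves only scalar multiplications, moment derivatives $\partial^\alpha_{\Gamma_s,z}$, and finite sums, all of which preserve holomorphy on $D_R^N$.

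Next, fix any $r'$ with $r < r' < R$. By the multidimensional Cauchy inequality applied on the polydisc of polyradius $(r',\ldots,r')$,
\begin{equation*}
|v_{n,l}| \leq (r')^{-|l|}\sup_{z \in D_{r'}^N}|v_n(z)| \qquad\text{for every } l \in \NN_0^N.
\end{equation*}
Hence for $z \in D_r^N$ we sum termwise and evaluate the resulting geometric sum:
\begin{equation*}
|M[v_n](z)| \leq \sum_{l \in \NN_0^N}|v_{n,l}|\,r^{|l|} \leq \Bigl(\prod_{i=1}^N\sum_{\alpha_i=0}^\infty (r/r')^{\alpha_i}\Bigr)\sup_{z \in D_{r'}^N}|v_n(z)| = \Bigl(\frac{r'}{r'-r}\Bigr)^{\!N}\sup_{z \in D_{r'}^N}|v_n(z)|.
\end{equation*}
Applying Proposition \ref{prop:estimation} with $r'$ in place of $r$ produces constants $\tilde C,\tilde H>0$ with $\sup_{D_{r'}^N}|v_n| \leq \tilde C\tilde H^n n!^{1/k_1}$, and setting $C := (r'/(r'-r))^N\tilde C$ and $H := \tilde H$ finishes the argument.

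I do not expect a real obstacle here: the whole proof reduces to keeping track of the geometric factor in the multidimensional Cauchy estimate. The exponent $d = Ms_0 + 1/k_1$ mentioned in the statement is a vestige of Lemma \ref{lemma_bound} and need not be tracked at this stage, since the only information about $v_n$ we actually use is the bound already extracted in Proposition \ref{prop:estimation}.
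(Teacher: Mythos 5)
Your proof is correct, but it takes a genuinely different route from the paper. The paper proves Proposition \ref{pr:major} by rerunning the entire inductive majorant argument: it replaces $\psi_n$, $g_n$, $v_n$ by $M[\psi_n]$, $M[g_n]$, $M[v_n]$, observes that the recurrence (\ref{eq:v_n}) majorizes coefficientwise, and repeats the proofs of Lemma \ref{lemma_bound} and Proposition \ref{prop:estimation} verbatim for the majorant series. You instead deduce the majorant bound a posteriori from the already-established sup-norm bound of Proposition \ref{prop:estimation}, via the multidimensional Cauchy coefficient inequalities and a geometric series, at the cost of shrinking the radius from $r'$ to $r$. Since the statement is quantified as ``for every $r<R$'', this loss of radius is harmless, and your argument is shorter and more elementary: it isolates the analytic work in Lemma \ref{lemma_bound} and treats the passage to $M[v_n]$ as pure soft analysis, whereas the paper's route keeps the radius fixed but duplicates the induction. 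Your preliminary check that $v_n\in\Oo(D_R^N)$ (by induction on the recurrence, or simply because Lemma \ref{lemma_bound} already presupposes and delivers finiteness of $\|v_n(z)\|_\rho$ on every $D_r^N$ with $r<R$) is the right thing to verify before invoking Cauchy's inequalities, and your remark that $d=Ms_0+1/k_1$ plays no role in the conclusion is accurate --- it is a vestige of the statement of Lemma \ref{lemma_bound}.
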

\begin{proof}
 It is sufficient to repeat the proofs of Lemma \ref{lemma_bound} and Proposition \ref{prop:estimation} with $\psi_n(z)$ replaced by
 $M[\psi_n](z)$ and $g_n(z)$ replaced by $M[g_n](z)$.
 Using the estimation
 \begin{multline*}
M[v_{n}](z)\ll\frac{m_{0}(n-M)}{m_{0}(n)}\Big[M[g_{n}](z)
\\
+\sum_{(j,\alpha)\in\Lambda}\sum_{p=q_{j,\alpha}}^{n}|c_{j,\alpha,p}|\frac{m_{0}(n-p)}{m_{0}(n-p-j)}\partial_{\Gamma_{s},z}^{\alpha}M[v_{n-p}](z)\Big]
\end{multline*}
 instead of (\ref{eq:v_n}) and repeating the proofs we get the assertion.
\end{proof}

Now we are ready to prove the main result of the paper:
\begin{thm}
Let $\hat{u}(t,z)=\sum_{n=0}^{\infty}u_{n}(z)t^{n}$
be a formal solution of (\ref{eq:main}). We also assume that the conditions (\ref{eq:cond_1}), (\ref{eq:cond_2}), (\ref{eq:cond_3}), (\ref{cond:orda}) and (\ref{eq:cond_5}) are satisfied. Then $\hat{u}(t,z)$ is of Gevrey order $1/k_1$ with respect to $t$, i.e. there exists $\tilde{R}>0$ such that for any $r<\tilde{R}$ there exist constants
$C,H>0$ satisfying 
\begin{equation*}
\sup_{z\in D_r^N}|u_{n}(z)|\leq CH^{n}n!^{1/k_{1}}\quad\textrm{for any}\quad n\in\NN_{0}.
\end{equation*}
\end{thm}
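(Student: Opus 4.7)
The plan is to transfer the Gevrey bound on $\hat v$ already established in Proposition \ref{pr:major} back to $\hat u$ by inverting the Borel transform $\Bo_{\Gamma_s/m,z}$ that was used to reduce (\ref{eq:main}) to (\ref{eq:borel}). Writing $u_n(z)=\sum_{l\in\NN_0^N}u_{n,l}z^l$ and $v_n(z)=\sum_{l\in\NN_0^N}v_{n,l}z^l$, the definition of the composed Borel transform (\ref{eq:composition}) gives the coefficient-wise relation $v_{n,l}=u_{n,l}\,m(l)/\Gamma_s(l)$, where $m(l):=m_1(l_1)\cdots m_N(l_N)$ and $\Gamma_s(l):=\Gamma_{s_1}(l_1)\cdots\Gamma_{s_N}(l_N)$; equivalently, $u_{n,l}=v_{n,l}\,\Gamma_s(l)/m(l)$.

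Next I would exploit the fact that, component by component, $\Gamma_{s_i}/m_i$ is a moment function of order $0$: by (\ref{eq:gevrey}) there exist constants $a_i>0$ such that $a_i^n\Gamma_{s_i}(n)\leq m_i(n)$ for every $n\in\NN_0$. Setting $a:=\min_i a_i$, which I may assume satisfies $a\leq 1$, this gives
\[
\frac{\Gamma_s(l)}{m(l)}=\prod_{i=1}^{N}\frac{\Gamma_{s_i}(l_i)}{m_i(l_i)}\leq a^{-|l|}\qquad\text{for every}\ l\in\NN_0^N,
\]
and consequently $|u_{n,l}|\leq a^{-|l|}|v_{n,l}|$ for all $n,l$.

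To finish, I would set $\tilde R:=aR$, where $R>0$ is the radius provided by Proposition \ref{pr:major}. For any $\tilde r<\tilde R$, choose $r$ with $\tilde r/a<r<R$. Applying Proposition \ref{pr:major} to $\hat v$ on $D_r^N$ and using that $M[v_n]$ has non-negative coefficients (so it is dominated on $D_r^N$ by its corner value $\sum_l|v_{n,l}|r^{|l|}$) produces constants $C,H>0$ independent of $n$ with $\sum_l|v_{n,l}|r^{|l|}\leq CH^n n!^{1/k_1}$. Then, for every $z\in D_{\tilde r}^N$,
\[
|u_n(z)|\leq\sum_l|u_{n,l}|\tilde r^{|l|}\leq\sum_l|v_{n,l}|(\tilde r/a)^{|l|}\leq\sum_l|v_{n,l}|r^{|l|}\leq CH^n n!^{1/k_1},
\]
which is exactly the claimed Gevrey-$1/k_1$ estimate.

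There is essentially no obstacle left: all of the analytic difficulty has been concentrated into Proposition \ref{pr:major} (and ultimately into Lemma \ref{lemma_bound}, which is the genuinely hard step of the paper). The present theorem merely records the observation that, because the moment-function ratio $\Gamma_s/m$ has Gevrey order $0$, reversing the spatial Borel transform $\hat v\mapsto\hat u$ costs nothing more than a uniform shrinkage of the spatial polydisc by the fixed factor $a$.
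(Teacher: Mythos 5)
Your proposal is correct and follows essentially the same route as the paper: both transfer Proposition \ref{pr:major} back to $\hat u$ via the coefficientwise identity $u_{n,l}=v_{n,l}\,\Gamma_s(l)/m(l)$, bound the order-zero ratio $\Gamma_s(l)/m(l)$ geometrically by $B^{|l|}$ using (\ref{eq:gevrey}), and absorb that factor into a fixed shrinkage of the spatial polydisc (the paper phrases this as $M[u_n](z)\ll M[v_n](Bz)$ with $\tilde R=R/B$, which is your $\tilde R=aR$ with $B=1/a$). No gaps.
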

\begin{proof}
Since $\sum_{n=0}^{\infty}v_{n}(z)t^{n}=\hat{v}(t,z)=\Bo_{\frac{\Gamma_s}{m},z}\hat{u}(t,z)=
\sum_{n=0}^{\infty}\Bo_{\frac{\Gamma_s}{m},z}u_{n}(z)t^{n}$, where $\Bo_{\frac{\Gamma_s}{m},z}$ is the composition of
Borel transforms of order zero with respect to $(z_1,\dots,z_N)$ given by (\ref{eq:composition}), we see that $v_n(z)=\Bo_{\frac{\Gamma_s}{m},z}u_{n}(z)$ for every $n\in\NN_0$.
By (\ref{eq:gevrey}) there exists $B>0$ such that
$$
\frac{\Gamma_s(l)}{m(l)}=\frac{\Gamma_{s_1}(l_1)}{m_1(l_1)}\times\cdots\times\frac{\Gamma_{s_N}(l_N)}{m_N(l_N)} \leq B^{|l|}\quad\textrm{for every}\quad l\in\NN_0^N.
$$ 
Hence we get
$$
u_n(z)\ll M[u_n](z)\ll\sum_{|l|=0}^{\infty}\frac{|u_{nl}|B^{|l|}}{\frac{\Gamma_s(l)}{m(l)}}z^l= M[v_n](Bz)
$$
and using Proposition \ref{pr:major} we conclude that
there exist constants
$C,H>0$ such that 
$$
|u_n(z)|\leq |M[v_n](Bz)|\leq
CH^{n}n!^{1/k_{1}}
$$
for every $z\in D^N_{r}$ with $r<\frac{R}{B}$,
which gives the assertion with $\tilde{R}=\frac{R}{B}$.
\end{proof}

\end{document}